\numberwithin{equation}{section}
\newtheorem{theorem}{Theorem}[section]
\newtheorem{lemma}[theorem]{Lemma}
\newtheorem{example}[theorem]{Example}
\newtheorem{cor}[theorem]{Corollary}
\newtheorem{rem}[theorem]{Remark}
\newtheorem{definition}[theorem]{Definition}
\newcommand{\R}{\mathbb{R}}
\DeclareMathSymbol{\leqslant}{\mathalpha}{AMSa}{"36} 
\DeclareMathSymbol{\geqslant}{\mathalpha}{AMSa}{"3E} 
\DeclareMathSymbol{\eset}{\mathalpha}{AMSb}{"3F}     
\renewcommand{\leq}{\;\leqslant\;}                   
\renewcommand{\geq}{\;\geqslant\;}                   
\newcommand{\dd}{\,\text{\rm d}}             
\newcommand{\Div}{\mathrm{div}}
\newcommand{\norm}[1]{\left\lVert#1\right\rVert}
\newcommand{\abs}[1]{\left\lvert#1\right\rvert}
\newcommand{\seminorm}[1]{\left[#1\right]}
\title[Regularization by noise for the evolutionary $p$-Laplace equation]{A pathwise Regularization by noise phenomenon for the evolutionary $p$-Laplace equation}
\author{Florian Bechtold, J\" orn Wichmann}
\address{Fakultät für Mathematik, Universität Bielefeld, 33501 Bielefeld, Germany}
\email{fbechtold@math.uni-bielefeld.de, jwichmann@math.uni-bielefeld.de}
\begin{document}
\begin{abstract}
    We study an evolutionary $p$-Laplace problem whose potential is subject to a translation in time. Provided the trajectory along which the potential is translated admits a sufficiently regular local time, we establish existence of solutions to the problem for singular potentials for which a priori bounds in  classical approaches break down, thereby establishing a pathwise regularization by noise phenomena for this non-linear problem.
\end{abstract}
\maketitle
\section{Introduction}
Let $\Omega$ be a bounded Lipschitz domain in $\R^d$, $T>0$ and $N \in \mathbb{N}$. We are interested in the evolutionary $p$-Laplace system
\begin{alignat}{2} \label{Original problem}
    \begin{aligned}
\partial_t u - \Div \,S(\nabla u) &= b(u) \quad &&\text{ on } \Omega \times [0,T], \\
u|_\Omega &=0 \quad &&\text{ on } \partial\Omega\times [0,T],\\
        u(0, \cdot)&=u_0 \quad &&\text{ on } \Omega,
    \end{aligned}
\end{alignat}
where $S(\xi) = \abs{\xi}^{p-2} \xi \in \R^{d\times N}$, $p\in (1,\infty)$ and $b:\mathbb{R}^N \to \mathbb{R}^N$. Equation~\eqref{Original problem} naturally arises as a perturbed gradient flow of the energy
\begin{align*}
    \mathcal{J}(u) =\frac{1}{p}  \int_{\Omega}\abs{\nabla u}^p \dd x
\end{align*}
on the Sobolev space $W^{1,p}_{0}(\Omega)$ of weakly differentiable vectorfields with vanishing trace. Questions related to the well-posedness of~\eqref{Original problem} are highly linked to regularity of $b \circ u$ and therefore $b$ as well as the notion of solutions. In particular, in the absence of certain growth conditions, the study of \eqref{Original problem} might be obstructed. Let us elaborate on this point in the following subsection.
\subsection*{Monotone operator theory}
The $p$-Laplace operator 
\begin{align*}
    Au:= -\Div\, S(\nabla u)
\end{align*}
 is a prominent example of a maximal monotone operator. The famous theory of monotone operators traces back to the early works of Minty~\cite{MR169064} and Browder~\cite{MR156204}. It inspired many mathematicians to study well-posedness of monotone evolution equations and perturbations of it, see e.g.~\cite{MR216342,MR226230,MR0259693,
 MR477358,MR675911,CH16,MR4041276}. A nice overview on monotone operators can be found e.g. in the books of Brezis~\cite{MR0348562}, Barbu~\cite{MR0390843} and Zeidler~\cite{MR1033498}.

 Let us briefly recall some main ideas and in particular function spaces that naturally appear in the study of \eqref{Original problem}. Towards this end, let us set $b \circ u = f$ and forget briefly that $f$ depends on $u$ and proceed formally. We can define the evolutionary problem as a steady problem
 \begin{align} \label{eq:General}
     B u := \partial_t u - \Div S(\nabla u) =  f.
 \end{align}
In this setting, we now need to find a suitable function space such that $B$ is surjective. First, note that for sufficiently smooth $u$ it holds
\begin{align*}
\langle Bu, u \rangle = \norm{u(T)}_{L^2}^2 - \norm{u(0)}_{L^2}^2 + \int_0^T \norm{\nabla u(s)}_{L^p}^p \dd s.
\end{align*}
Therefore, a natural ansatz space to consider is
\begin{align*}
    \mathbb{X} := C\big([0,T];L^2(\Omega)\big) \cap L^p\big(0,T;W^{1,p}_0(\Omega)\big).
\end{align*}
Indeed, monotone operator theory implies that we can solve~\eqref{eq:General} in the weak sense for $f \in \mathbb{X}'$ with a quantified energy estimate
\begin{align*}
    \norm{u}_{\mathbb{X}} \lesssim \norm{f}_{\mathbb{X}'}.
\end{align*}
Additionally, $u$ enjoys some time regularity as
\begin{align*}
    \norm{\partial_t u}_{\mathbb{X}'} \leq \norm{\nabla u}_{L^p(0;T;L^p(\Omega))}^{p-1} + \norm{f}_{\mathbb{X}'}.
\end{align*}
Overall, we find that the solution map
\begin{align*}
    L^2(\Omega) \times \mathbb{X}' \ni (u_0,f) \mapsto u \in \mathbb{Y},
\end{align*}
where 
\begin{align*}
   \mathbb{Y}  = \left\{ v \in \mathbb{X} \big| \, v(0) = u_0 \, , \, \partial_t v \in \mathbb{X}' \right\} 
\end{align*}
is bounded, i.e., 
\begin{equation} \label{intro:weakEnergy}
        \norm{u}_{\mathbb{X}} + \norm{\partial_t u}_{\mathbb{X}'} \lesssim \norm{f}_{\mathbb{X}'}+ \norm{u_0}_{L^2(\Omega)}\footnote{We neglect additional difficulties due to the miss match of the exponents}.
\end{equation}

A similar approach can be employed to construct weak solutions to~\eqref{Original problem} if $f = b\circ u$, provided that we can close the energy estimate~\eqref{intro:weakEnergy}. That is we need to remove the unknown solution $u$ from the right hand side. This can be done by imposing a growth condition on $b$, e.g., for all $u \in \mathbb{R}^N$
\begin{align}
    \abs{b(u) \cdot u} \lesssim \abs{u}^r + 1,
    \label{growth intro}
\end{align}
where $r \in \mathbb{R}$. Conceptually the cases~$r \geq 1$ and~$r < 1$ are completely different. The former case bounds the growth of~$b$ for large values of~$\abs{u}$. The latter one treats potentials whose singularities are located in the origin. The classical approach can cover the range~$r \in (0, p\vee 2]$.

If the growth at infinity is to strong in comparison to the dissipation of the $p$-Laplace operator solutions may blow up in finite time. This was already observed in the linear scalar case, $p = 2$ and $N=1$, by Fujita~\cite{MR0269995}. It was later extended by Tsutsumi in~\cite{MR0312079} to the non-linear scalar case~$S(\nabla u)_i = \abs{\partial_i u}^{p-2} \partial_i u$ for the prototype perturbation $b(u) = \max( u^{r-1},0)$. This leads to an upper bound $r \leq p \vee 2$.

The lower bound on the growth rate can be seen by choosing $b(u) =-|u|^{\eta - 1}u$ for some $ r= \eta + 1 \leq 0$, cf. Example~\ref{ex:Main}. In other words only minor singularities in zero can be treated with the classical weak solution approach.
\\
\\
In the following, we intend to study the effect of translations of the potential $b$ along so called "regularizing paths" $w$, that is we are interested in the problem
\begin{alignat}{2}\label{regularized problem}
    \begin{aligned}
    \partial_t u - \Div S(\nabla u) &= b(u-w) \quad &&\text{ on } \Omega \times [0,T], \\
u|_\Omega &=0 \quad &&\text{ on } \partial\Omega\times [0,T],\\
        u(0, \cdot)&=u_0 \quad &&\text{ on } \Omega.
    \end{aligned}
\end{alignat}

From a physical point of view, translating the potential in time can be interpreted as uncertainty as to the location of the origin of the potential. By a "regularizing path", we will understand a continuous path $w$ that admits a sufficiently regular local time\footnote{We refer to the corresponding section in the Appendix for the definition of the local time of a path and related concepts.} such that techniques from pathwise regularization by noise
 \` a la \cite{gubicat}, \cite{galeati2020noiseless}, \cite{harang2020cinfinity} become applicable. Let us briefly sketch some main ideas of these approaches, in particular in the spirit of \cite{harang2020cinfinity}, which we will be able to employ also in the study of \eqref{regularized problem}. 
 
 \subsection*{Pathwise regularization by noise in a nutshell}
The starting point for investigations of this type usually consists in the study of averaging operators along a path $w$, given by
 \[
(T^{-w}_tb)(u):= \int_0^tb(u-w_s)ds.
 \]
As was recognized already in  \cite{gubicat}, paths enjoying so called $\rho$ irregularity \cite[Definition 1.3]{gubicat}, i.e. paths whose Fourier transform of the occupation measure decrease sufficiently rapidly lead to a regularization effect in that function $u\mapsto (T^{-w}_tb)(u)$ will enjoy higher regularity than $b$. The reason for this gain of regularity can be made clear as follows: Assuming $w$ to also admit a local time $L$, we can rewrite the averaging operator thanks to the occupation times formula as 
\[
\int_0^tb(u-w_s)ds=\int_{\R^N}b(u-z)L_t(z)dz=(b*L_t)(u).
\]
If the Fourier transform of the occupation measure is decreasing rapidly in a certain quantifiable sense, the local time $L$ will enjoy some high quantifiable spatial regularity. By Young's inequality (see for example \eqref{young} or \cite{kuhn2021convolution} for a generalization to the Besov space setting), this implies that the regularity of $T^{-w}_tb$ will essentially increase by the regularity of $L_t$ with respect to the regularity of $b$. While the canonical paths for which this gain in regularity can be quantified are realizations of fractional Brownian motion \cite{gubicat}, \cite{galeati2020noiseless}, \cite{Gerencs_r_2022}, $\rho$ irregularity is in fact a typical property among H\"older continuous paths in the sense of prevalence \cite{prevalence}.\\
\\
Taking this observation of increased regularity of the averaging operator as a starting point, one can exploit this local gain of regularity to further study Riemann-sum type expressions of the form
\[
\mathcal{I}^n_{T}=\sum_{[s,t]\in \mathcal{P}^n([0,T])}(T^{-w}_{s,t}b)(u_s)
\]
for partitions $\mathcal{P}^n([0,T])$ of $[0,T]$ and a continuous function $u$. The tool that then ensures the convergence of such Riemann-sums in the limit $|\mathcal{P}^n|\to 0$ is Gubinelli's Sewing Lemma \cite{gubi}, \cite{frizhairer}, also cited in the Appendix as Lemma \ref{sewing}. Let us stress at this point already that since the Sewing Lemma is formulated in a "H\" older-space setting", convergence of $(\mathcal{I}^n)_n$ will however always require at least some H\"older regularity of the function $u$. Provided this is available, i.e. $u$ is sufficiently H\"older regular and the averaging operator $(T^{-w}b)$ sufficiently regular in time and space, the Sewing Lemma ensures the convergence of $(\mathcal{I}^n)_n$ as $|\mathcal{P}^n|\to 0$. Note that as this convergence eventually only requires information on the regularity of the averaging operator and not the regularity of $b$, this construction (which can be shown to coincide with the classical Lebesgue integral for regular $b$) naturally extends the definition of the Lebesgue integral to irregular and potentially even distributional $b$, i.e. one can define
\[
\mathcal{I}_{T}=\lim_{|\mathcal{P}^n|\to 0}\sum_{[s,t]\in \mathcal{P}^n([0,T])}(T^{-w}_{s,t}b)(u_s)=:\int_0^Tb(u_s-w_s)ds
\]
Moreover, even in the case of a regular nonlinearity $b$ where the above can be defined as a classical Lebesgue integral, this alternative definition provides alternative a priori bounds through the Sewing Lemma of which we will crucially make use in our work. 
\subsection*{Application to the p-Laplace equation with shifted potential}
From the above considerations and on the background of classical monotone operator theory, a first approach in studying \eqref{regularized problem} might consist in investigating the weak formulation
\begin{equation*}
    \langle u_t-u_s, \varphi\rangle+\int_s^t \langle S(\nabla u_r), \nabla \varphi\rangle dr=\int_s^t\langle b(u_r-w_r), \varphi \rangle dr,
\end{equation*}
where the right hand side should be interpreted as 
\[
\int_s^t\langle b(u_r-w_r), \varphi \rangle dr:=\lim_{|\mathcal{P}^n|\to 0}\sum_{[s',t']\in \mathcal{P}^n([s,t])}\langle (T^{-w}_{s',t'}b)(u_{s'}), \varphi\rangle.
\]
Note however that the classical monotone operator approach to this problem, working on the Gelfand triple $\big(W^{1,p}_0(\Omega)\cap L^2(\Omega), L^2(\Omega), (W^{1,p}_0(\Omega)\cap L^2(\Omega))'\big)$, only yields a priori bounds that permit to conclude $u\in C([0,T], L^2(\Omega))$. In particular, no additional H\"older regularity in time on this spatial regularity scale is obtained, meaning the Sewing argument can't be closed making  the right hand side ill defined. To circumvent this problem, we employ a strong formulation to the problem, i.e. we strive for solutions $u$ that satisfy
\begin{equation}
    \label{strong formulation}
    u_t-u_s-\int_s^t \mbox{div} S(\nabla u_r)dr=\int_s^t b(u_r-w_r)dr
\end{equation}
understood as an equality in $L^2(\Omega)$ and where for singular $b$ the right hand side is understood in the sense 
\begin{equation}
    \int_s^t b(u_r-w_r):=\lim_{|\mathcal{P}^n|\to 0}\sum_{[s',t']\in \mathcal{P}^n([s,t])} (T^{-w}_{s',t'}b)(u_{s'}),
    \label{robustification}
\end{equation}
where the convergence on the right hand side holds in $L^2(\Omega)$, uniformly in time on $[0,T]$. Note that while such strong solutions naturally require higher regularity of the initial condition, namely $u_0\in W^{1,p}_0(\Omega)$, they allow for further a priori bounds in $C^{0,1/2}\big([0,T];L^2(\Omega)\big)$ provided $b$ is sufficiently regular (refer to section \ref{strong solutions}). Having for regular $b$ such a priori bounds at our disposal, we can then harness the regularizing effect of the averaging operator $T^{-w}$ as discussed above to obtain a priori bounds that are robust even when considering singular potentials $b$ (refer to section \ref{a priori section}). The so obtained new a priori bounds for singular $b$ can then be used to obtain solutions to \eqref{strong formulation} using classical monotonicity arguments (refer to section \ref{passage to limit}). In summary, this allows us to prove our main theorem: 

\begin{theorem}[Existence of robustified solution]\label{main theorem}
Let $p > 1$ and $u_0 \in L^2(\Omega) \cap W^{1,p}_0(\Omega)$.
For $r\in [1,\infty)$ and $q\in  [r, \infty)$ let $b:\R^N\to \R^N$ satisfy $b\in L^{2q}(\mathbb{R}^N)$.  Suppose that $w:[0, T]\to \R^N$ is continuous and admits a local time $L$ which satisfies $L\in C^{0,\gamma}\big([0,T]; W^{1, r'}(\mathbb{R}^N)\big)$ for some $\gamma\in (1/2, 1)$. 

Then there exists a robustified solution
\begin{align*}
    u \in \left\{v \in L^\infty\big(0,T; L^2(\Omega) \cap W^{1,p}_0(\Omega) \big)\big| \, \partial_t v, \Div S(\nabla v) \in L^2([0,T] \times \Omega) \right\}
\end{align*}
to \eqref{regularized problem} in the sense of Definition \ref{def:robustified}. Moreover, the following a priori bound is valid
\begin{align}
\begin{aligned}
&\norm{u}_{L^\infty(0,T; L^2(\Omega))}^2 + \norm{\nabla u}_{L^\infty(0,T; L^p(\Omega))}^p + \norm{\partial_t u}_{L^2([0,T]\times \Omega)}^2 + \norm{\Div S(\nabla u)}_{L^2([0,T]\times \Omega)}^2 \\
&\hspace{3em} \lesssim \norm{u_0}_{L^2(\Omega)}^2+\norm{\nabla u_0}_{L^p(\Omega)}^p + \norm{b}_{L^{2q}(\mathbb{R}^N)}^4\norm{L}_{C^{0,\gamma}([0,T];W^{1, r'}(\mathbb{R}^N))}^2.
\end{aligned}
\end{align}
Suppose moreover that $b$ satisfies the monotonicity condition, for all $u,v \in \mathbb{R}^N$,
\begin{align} \label{eq:MonotoneB}
    (b(u)-b(v))\cdot ( u-v) \leq 0,
\end{align}
then robustified solutions to \eqref{regularized problem} in the sense of Definition \ref{def:robustified} are unique.
\end{theorem}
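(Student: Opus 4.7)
My plan closely follows the roadmap sketched just above the statement. First I would regularise $b$ to a smooth approximation $b_\varepsilon$, invoke classical strong existence theory for the regularised problem, then derive an a priori bound that depends only on $\|b\|_{L^{2q}}$ and $\|L\|_{C^{0,\gamma}W^{1,r'}}$ by exploiting the averaging operator, and finally pass to the limit $\varepsilon \to 0$ by a monotonicity argument. Uniqueness under \eqref{eq:MonotoneB} is an independent short step.

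\textbf{Step 1: approximation and strong energy identity.} Take $b_\varepsilon = b*\rho_\varepsilon \in C^\infty_b(\R^N)$, so that $b_\varepsilon \to b$ in $L^{2q}(\R^N)$ and $b_\varepsilon$ is bounded and globally Lipschitz. For each fixed $\varepsilon$ the map $(u,t) \mapsto b_\varepsilon(u-w(t))$ is Lipschitz in $u$ and bounded, so the strong existence theory produces a unique $u_\varepsilon$ in the desired class. Testing the equation with $\partial_t u_\varepsilon$ (and exploiting $-\Div S(\nabla u_\varepsilon)$) yields
\[
\|\partial_t u_\varepsilon\|_{L^2([0,T]\times\Omega)}^2 + \|\nabla u_\varepsilon(t)\|_{L^p(\Omega)}^p \lesssim \|\nabla u_0\|_{L^p(\Omega)}^p + \int_0^t \langle b_\varepsilon(u_\varepsilon-w),\, \partial_t u_\varepsilon\rangle\,dr,
\]
which in particular gives $u_\varepsilon \in C^{0,1/2}([0,T]; L^2(\Omega))$ with seminorm controlled by $\|\partial_t u_\varepsilon\|_{L^2_tL^2_x}$.

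\textbf{Step 2: robust bound via the Sewing Lemma.} The heart of the argument is controlling the forcing term in a way that survives $\varepsilon \to 0$. Writing $T^{-w}_{s,t}b_\varepsilon = b_\varepsilon * (L_t - L_s)$ via the occupation times formula and invoking Young's convolution inequality with exponents tied to $(2q, r', 2)$, together with $L \in C^{0,\gamma}W^{1,r'}$, I expect a bound of the form
\[
\|(T^{-w}_{s,t}b_\varepsilon)(y)\|_{L^2_y} \lesssim |t-s|^{\gamma}\, \|b\|_{L^{2q}(\R^N)}\, \|L\|_{C^{0,\gamma}W^{1,r'}}.
\]
Combined with the H\"older control of $u_\varepsilon$ from Step~1, the Sewing Lemma upgrades the Riemann sums defining $\int_s^t b_\varepsilon(u_\varepsilon-w)\,dr$ to an estimate of H\"older exponent $\gamma > 1/2$ in $L^2(\Omega)$, depending only on $\|b\|_{L^{2q}}$ and $\|L\|_{C^{0,\gamma}W^{1,r'}}$. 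Feeding this back into the energy identity and using Young's inequality to absorb the $C^{0,1/2}L^2$ seminorm on the left-hand side (possible precisely because $\gamma > 1/2$) closes the a priori estimate uniformly in $\varepsilon$ and yields the announced bound.

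\textbf{Step 3: passage to the limit and uniqueness.} The uniform bounds and Aubin--Lions produce, along a subsequence, strong convergence $u_\varepsilon \to u$ in $C([0,T]; L^2(\Omega))$, weak-$*$ convergence in $L^\infty(0,T; W^{1,p}_0(\Omega))$, and weak convergence of $\partial_t u_\varepsilon$ and $\Div S(\nabla u_\varepsilon)$ in $L^2$. Identifying the non-linear limit $\Div S(\nabla u)$ proceeds by the standard Minty trick for the maximal monotone $p$-Laplacian. The main technical obstacle I foresee is the identification of the limit of the right-hand side: using the uniform sewing bound on approximants and $b_\varepsilon \to b$ in $L^{2q}$, one must show that the Riemann sums $\sum (T^{-w}_{s',t'}b_\varepsilon)(u_\varepsilon(s'))$ converge in $L^2(\Omega)$, uniformly in $t \in [0,T]$, to the robustified integral of Definition~\ref{def:robustified}; this hinges on the uniform H\"older control of $(u_\varepsilon)_\varepsilon$. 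Finally, given \eqref{eq:MonotoneB}, for two solutions $u,v$ with the same data the strong formulation produces, in $L^2(\Omega)$,
\[
\tfrac{1}{2}\tfrac{d}{dt}\|u-v\|_{L^2}^2 = \langle \Div S(\nabla u) - \Div S(\nabla v),\, u-v\rangle + \langle b(u-w) - b(v-w),\, u-v\rangle,
\]
and monotonicity of the $p$-Laplacian together with \eqref{eq:MonotoneB} renders both brackets non-positive, forcing $u \equiv v$.
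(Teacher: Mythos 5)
Your overall road map (mollify $b$, construct strong solutions and their energy estimates, robustify those estimates using the Sewing Lemma and the local time, then pass to the limit via Aubin--Lions and Minty) matches the paper's strategy, but Step~2 misses the crucial technical idea and, as written, the a priori bound does not close. After testing with $\partial_t u^\varepsilon$ and applying Young's inequality to absorb $\norm{\partial_t u^\varepsilon}_{L^2_tL^2_x}^2$, the strong energy estimate leaves on the right-hand side the quantity $\int_0^T\int_\Omega\abs{b_\varepsilon(u^\varepsilon-w)}^2\dd x\dd t$; it is this \emph{squared} $L^2_tL^2_x$-norm of the forcing that must be controlled uniformly in $\varepsilon$. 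Robustifying the averaging operator $T^{-w}b_\varepsilon$ and obtaining a $\gamma$-H\"older bound on $t\mapsto\int_0^t b_\varepsilon(u^\varepsilon_r-w_r)\dd r$ in $L^2_x$ does not control this term: the pairing $\int_0^T\langle b_\varepsilon(u^\varepsilon-w),\partial_t u^\varepsilon\rangle\dd t$ is not a Young integral since $\partial_t u^\varepsilon$ lies only in $L^2_tL^2_x$ and not in any H\"older space, and unwinding the ``$dF$'' notation just returns the original Lebesgue pairing, for which Cauchy--Schwarz needs $\norm{b_\varepsilon(u^\varepsilon-w)}_{L^2_tL^2_x}$, i.e.\ exactly the quantity you were trying to avoid estimating directly. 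The paper's key device (Lemma~\ref{identificationII}) is to robustify the \emph{squared} potential: one shows $\int_0^T\int_\Omega\abs{b_\varepsilon(u^\varepsilon-w)}^2\dd x\dd t$ equals the sewing of the scalar germ $A^\varepsilon_{s,t}=\int_\Omega(\abs{b_\varepsilon}^2*L_{s,t})(u^\varepsilon_s)\dd x$, with bound $C\norm{b}_{L^{2q}}^2\norm{L}_{C^{0,\gamma}_tW^{1,r'}}(1+\norm{u^\varepsilon}_{C^{0,1/2}_tL^2_x})$. Closing the estimate then requires the algebraic absorption of Lemma~\ref{app:Algebraic}, because this bound is affine in $\norm{u^\varepsilon}_{C^{0,1/2}_tL^2_x}$ while the left-hand side is quadratic; that step is what produces $\norm{b}_{L^{2q}}^4$ in the theorem, which your sketch, carrying a single power of $\norm{b}_{L^{2q}}$, cannot reproduce.

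Your uniqueness step is likewise too quick. Writing $\tfrac{d}{dt}\norm{u-v}_{L^2_x}^2$ with the term $\langle b(u-w)-b(v-w),u-v\rangle$ implicitly assumes that the robustified integral $(\mathcal{I}A^u)_{0,t}$ is differentiable in time with derivative $b(u_t-w_t)$. For singular $b$ this is precisely what is unavailable: the robustified formulation defines the right-hand side only as a sewing, with no pointwise-in-time object to which the sign condition \eqref{eq:MonotoneB} could be applied. The paper resolves this by approximating $b$ by smooth monotone $b^n$, identifying $\partial_t(\mathcal{I}A^{n,u})_t=b^n(u_t-w_t)$ for the smooth approximants via Lemma~\ref{lem:Rob2Strong}, integrating by parts, and controlling the resulting commutator errors with the sewing-convergence Lemma~\ref{sewing convergence}. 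An intermediate approximation of this type is necessary for your monotonicity argument to have content.
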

Let us illustrate this established regularization effect by means of a concrete example. A detailed verification of the claims of the Examples can be found in Appendix~\ref{appendix:Example}.

\begin{example} \label{ex:Main}
Let $K > 0$ and define the potential
\begin{align} \label{example b}
    b(u) := - |u|^{\eta-1}u \textbf{1}_{\{|u|\leq K\}}.
\end{align}
Then the following statements are true:
\begin{enumerate}
    \item \label{it:NonExistence} If $\eta \leq -1$, then~\eqref{Original problem}, i.e. the problem without regularizing path does not have a weak solution for $u_0=0$.
    \item \label{it:ExistenceRobustified} Let $\eta \in (-N/2,0)$ and
    \begin{align} \label{ex:ConditionH}
        H <\frac{1}{(2-4\eta) \vee N}
    \end{align}
    and $w^H$ be a $N$-dimensional fractional Brownian motion with Hurst parameter~$H$. Then for almost any realization $w^H(\omega):[0,T]\to \R^N$~\eqref{regularized problem} has a robustified solution for any $u_0\in L^2(\Omega)\cap W^{1, p}_0(\Omega)$.
\end{enumerate}
Essentially~\eqref{it:NonExistence} shows that in the unperturbed setting the origin is a singular state, which is due to the singularity of $b$ in zero. In contrast to this, the presence of the highly oscillating path $w^H(\omega)$ ensures that the solution $u$ does not spend too much time in the singularity of $b$. Condition~\eqref{ex:ConditionH} ensures that this effect is quantitatively sufficiently strong for the singularity not to obstruct existence theory, as formulated in our main theorem above. Overall, we can thus observe a regularization phenomenon in dimension $N\geq 3$. In particular, for $N=3$ and $\eta=1$, there is no weak solution to \eqref{Original problem} for $u_0=0$, while \eqref{regularized problem} with $u_0=0$ admits a robustified solution for almost every realization $w^H(\omega)$ of a fractional Brownian motion, provided $H<1/6$. 
\end{example}

\begin{rem}
Let us stress that we are employing a pathwise regularization by noise argument in the spirit of \cite{harang2020cinfinity} relying on the study of local times and their regularities and not in the spirit of \cite{gubicat}, \cite{galeati2020noiseless} based on the study of averaging operators. While results on the regularizing effect of averaging operators are slightly less optimal for realizations of fractional Brownian motion for example, they allow for a completely pathwise i.e. analytical treatment. In contrast, arguing in the spirit of \cite{gubicat}, \cite{galeati2020noiseless} would require to employ a tightness argument. Moreover, if the perturbing path is the realization of a stochastic process $w$, note that as our proof employs a compactness argument, we lose measurability of our solution with respect to the probability space on which $w$ is defined. While this can be retained in an approach along the lines of \cite{gubicat}, \cite{galeati2020noiseless}, one would only obtain a probabilistically weak solution in that case.

\end{rem}

\subsection*{Remarks on the literature}
Starting with the seminal work \cite{gubicat}, taken up again by \cite{galeati2020noiseless}, \cite{harang2020cinfinity}, \cite{gerencsergal}, \cite{tolomeo} pathwise regularization by noise has seen considerable developments in recent years. Areas in which such techniques have been succesfully implemented include particle systems \cite{particlesystems}, distribution dependent SDEs \cite{Galeati2022}, \cite{galeati2}, multiplicative SDEs \cite{galeatiharang}, \cite{bechtold}, \cite{toyomu}, \cite{dareiotis}, \cite{catellier2} and perturbed SDEs \cite{lukasz}. An extension of pathwise regularization techniques to the two parameter setting with applications to regularization by noise for the stochastic wave equation was established in \cite{bechtold3}. Pathwise regularization by noise for the multiplicative stochastic heat equations with spatial white noise were treated in \cite{catellierharang}, \cite{bechtold2}. 

\subsection*{Outline of the paper}
In Section~\ref{sec:Setup} we introduce basic notation and the notion of solution. Section~\ref{strong solutions} addresses quantified but implicit a priori bounds for strong solutions. In Section~\ref{a priori section} we close the a priori bounds from Section~\ref{strong solutions}. Lastly, Section~\ref{passage to limit} copes with the identification of the limit and a discussion on uniqueness. In the Appendix~\ref{app:Appendix} we verify Example~\ref{ex:Main}, present results related to occupation measures, local times and sewing techniques and give details on the identification of limits for monotone equations.

\section{Mathematical setup} \label{sec:Setup}

 Let $\Omega \subset \R^d$ for $d \geq 1$ be a bounded Lipschitz domain. For some given $T>0$ we denote by $I := [0,T]$ the time interval and write $\Omega_T := I \times \Omega$ for the time space cylinder. We write $f \lesssim g$ for two non-negative quantities $f$ and $g$ if $f$ is bounded by $g$ up to a multiplicative constant. Accordingly we define $\gtrsim$ and $\eqsim$. Moreover, we denote by $c$ a generic constant which can change its value from line to line. For $r\in [1, \infty]$, we denote by $r' = r/(r-1)$ its H\"older conjugate. We do not distinguish between scalar, vector and matrix-valued functions.

\subsection*{Function spaces} \label{sec:Function spaces}
As usual, for $q \in [1,\infty]$, let $L^q(\Omega)$ denote the Lebesgue space and $W^{1,q}(\Omega)$ the Sobolev space on the domain~$\Omega$, respectively.  
Furthermore, we denote by $W^{1,q}_0(\Omega)$ the Sobolev space with zero boundary values. For $q < \infty$ it is the closure of $C^\infty_c(\Omega)$ (smooth functions with compact support) in the $W^{1,q}(\Omega)$-norm. Addtionally, we denote by $W^{-1,q'}(\Omega)$ the dual of $W^{1,q}_0(\Omega)$. We abbreviate function spaces on the domain by $L^p_x := L^p(\Omega)$ and on the full space by $L^p:=L^p(\R^N)$ with suitable modifications for Sobolev norms. The inner product in $L^2_x$ is denoted by~$(\cdot, \cdot )$ and duality pairings are written as $\langle \cdot, \cdot \rangle$. 

For a Banach space $\left(X, \norm{\cdot}_X \right)$ let $L^q(I;X)$ be the Bochner space of Bochner-measurable functions $u: I \to X$ satisfying $t \mapsto \norm{u(t)}_X \in L^q(I)$. Moreover, $C^0(I;X)$ is the space of continuous functions with respect to the norm-topology. We also use $C^{0,\alpha}(I;X)$, $\alpha \in (0,1]$, for the space of $\alpha$-H\"older continuous functions.  We abbreviate the notation $ L^q_t X := L^q(I;X) $ and $C^0_t X = C^0(I;X)$. 

For $s\in \R^+$ we further note by by $H^{s}$ the space of Bessel-potentials, 
\[
H^{s}:=\{ f\in \mathcal{S}'|\ \norm{f}_{H^{s}}:=\norm{\mathcal{F}^{-1} (1+|\xi|^2)^{s/2}\mathcal{F}f}_{L^2}<\infty \}
\]
Let us also recall a particular instance of Young's convolution inequality adapted to our setting, i.e. 
\begin{equation}
\label{young}
    \norm{f*g}_{C^{0,1}}\lesssim \norm{f}_{L^r}\norm{g}_{W^{1,r'}},
\end{equation}
which is a consequence of $D_x (f*g)=f*(D_xg)$ and Young's convolution inequality in the classical setting.

\subsection*{Solution concepts}
In the following, let us discuss different notions of solutions to \eqref{regularized problem}. We begin with the classical notions of weak and strong solutions before passing on to so called robustified solutions that exploit the gain in regularity due to the regularizing path $w$ as discussed in the introduction. 
\begin{definition}[Classical] \label{def:ClassicalSolution}
A function $u$ is called weak solution to~\eqref{regularized problem} if
\begin{enumerate}
    \item (Regularity) $u \in C^0_{t} L^2_x \cap L^{p}_t W^{1,p}_{0,x}$, $b(u_\cdot - w_\cdot) \in L^{p'}_t W^{-1,p'}_x$ and 
    \item (Tested equation) for all $t \in I$ and $\xi \in C^\infty_{c,x}$
    \begin{align} \label{def:WeakSolutionEq}
    \begin{aligned}
        &\int_\Omega (u_t - u_0) \cdot \xi \dd x + \int_0^t \int_\Omega S(\nabla u) : \nabla \xi \dd x \dd s = \int_0^t \left \langle b(u - w), \xi \right\rangle \dd s.
    \end{aligned}
    \end{align}
\end{enumerate}

A weak solution $u$ is called strong solution to~\eqref{regularized problem} if additionally
\begin{enumerate}
    \item (Regularity)  $\partial_t u, \, \Div S(\nabla u), \, b(u_\cdot - w_\cdot) \in L^{2}_t L^2_x$ and 
    \item (Point-wise equation) for almost all $(t,x) \in \Omega_T$
    \begin{align} \label{def:StrongSolutionEq}
    \begin{aligned}
         \partial_t u - \Div S(\nabla u) = b(u- w).
    \end{aligned}
    \end{align}
\end{enumerate}
\end{definition}

\begin{definition}[Robustified] \label{def:robustified}
We call $u$ a solution to \eqref{regularized problem} in the robustified sense if 
\begin{align}
    u \in \left\{v \in C^{0, 1/2}_tL^2_x \cap L^\infty_tW^{1,p}_{0,x}  \big| \, \partial_t v, \Div S(\nabla v) \in L^2_t L^2_x \right\}
\end{align}
and if for any $t\in I$ we have
\begin{equation} \label{eq:robustified}
       u_t-u_0-\int_0^t \Div S(\nabla u_r) \dd r=(\mathcal{I}A^u)_{0,t}.
\end{equation}
understood as an equality in $L^2_x$, where $\mathcal{I}A^u$ denotes the sewing\footnote{Refer to Lemma \ref{sewing} in the Appendix} of the germ
\[
A^u_{s,t}=(b*L_{s,t})(u_s).
\]
\end{definition}

The next two lemmata verify that the concept of robustified solutions coincides with classically defined strong solutions in the smooth setting.
\begin{lemma}
\label{identification}Let $b$ be smooth and bounded and assume that $w$ admits a local time $L\in C^{0, 1/2+\epsilon}_tL^2_x$ for some $\epsilon>0$. Then any strong solution $u\in C^{0,1/2}_tL^2_x$ to \eqref{regularized problem} is a solution in the robustified sense. 
\end{lemma}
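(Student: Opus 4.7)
\emph{Proof plan.} The strong formulation \eqref{def:StrongSolutionEq} integrated in time yields the identity
\begin{equation*}
    u_t-u_0-\int_0^t \Div S(\nabla u_r)\dd r=\int_0^t b(u_r-w_r)\dd r
\end{equation*}
in $L^2_x$ for every $t\in I$. The content of the lemma is therefore the identification of the classical Bochner integral $\tilde A_t:=\int_0^t b(u_r-w_r)\dd r$ with the sewing $(\mathcal I A^u)_{0,t}$. Verification of the ambient regularity $u\in C^{0,1/2}_tL^2_x\cap L^\infty_tW^{1,p}_{0,x}$ with $\partial_t u,\Div S(\nabla u)\in L^2_tL^2_x$ then follows from the strong solution hypothesis together with a standard $p$-Laplace energy estimate obtained by testing \eqref{def:StrongSolutionEq} with $-\Div S(\nabla u)$, whose right hand side is harmless thanks to the boundedness of $b$.

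The core step is a local comparison between $\tilde A_{s,t}:=\tilde A_t-\tilde A_s$ and the germ $A^u_{s,t}=(b*L_{s,t})(u_s)$. The occupation times formula gives
\begin{equation*}
    \int_s^t b(u_s(\cdot)-w_r)\dd r=\int_{\R^N}b(u_s(\cdot)-z)L_{s,t}(z)\dd z=(b*L_{s,t})(u_s(\cdot))=A^u_{s,t},
\end{equation*}
so that
\begin{equation*}
    \tilde A_{s,t}-A^u_{s,t}=\int_s^t\bigl[b(u_r-w_r)-b(u_s-w_r)\bigr]\dd r.
\end{equation*}
Since $b$ is smooth and bounded, $\nabla b$ is bounded on the (bounded) range of $u-w$, and together with $u\in C^{0,1/2}_tL^2_x$ this yields $\|\tilde A_{s,t}-A^u_{s,t}\|_{L^2_x}\lesssim |t-s|^{3/2}$.

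Next I check the sewing compatibility of $A^u$. A direct computation gives
\begin{equation*}
    \delta A^u_{s,m,t}=A^u_{s,t}-A^u_{s,m}-A^u_{m,t}=(b*L_{m,t})(u_s)-(b*L_{m,t})(u_m).
\end{equation*}
Because $w$ is continuous on $[0,T]$ its image is compact, hence $L_{m,t}$ is supported in a fixed compact set $K\subset\R^N$. Combined with the hypothesis $L\in C^{0,1/2+\epsilon}_tL^2_x$ this implies $\|L_{m,t}\|_{L^1}\lesssim |K|^{1/2}\|L_{m,t}\|_{L^2}\lesssim |t-m|^{1/2+\epsilon}$, and therefore $\|b*L_{m,t}\|_{C^{0,1}}\leq\|\nabla b\|_\infty\|L_{m,t}\|_{L^1}\lesssim |t-m|^{1/2+\epsilon}$. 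Combined with $\|u_s-u_m\|_{L^2_x}\lesssim|m-s|^{1/2}$ this gives $\|\delta A^u_{s,m,t}\|_{L^2_x}\lesssim|t-s|^{1+\epsilon}$, which is the hypothesis of the sewing Lemma \ref{sewing}.

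To conclude, the sewing lemma produces a unique additive functional $\mathcal I A^u$ with $\|(\mathcal I A^u)_{s,t}-A^u_{s,t}\|_{L^2_x}\lesssim |t-s|^{1+\epsilon}$. The functional $\tilde A_{s,t}$ is trivially additive and, by the first step, also satisfies the same defect bound with exponent $3/2>1+\epsilon$ (shrinking $\epsilon$ if necessary). By the uniqueness part of the sewing lemma, $\tilde A_{s,t}=(\mathcal I A^u)_{s,t}$ for every $0\leq s\leq t\leq T$, which substituted into the integrated strong equation yields exactly \eqref{eq:robustified}. The main technical subtlety is the estimate on $\|b*L_{m,t}\|_{C^{0,1}}$: the hypothesis only provides $L^2$ spatial regularity of the local time, so the Lipschitz norm of the convolution has to be absorbed by $\nabla b$ rather than $\nabla L$, which is made possible precisely by the compact support of $L$ coming from the continuity of $w$.
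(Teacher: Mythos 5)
Your proof is correct and follows essentially the same route as the paper's: establish the germ bound $\|\delta A^u_{s,m,t}\|_{L^2_x}\lesssim|t-s|^{1+\epsilon}$ to invoke the Sewing Lemma, compare $A^u_{s,t}$ with $\int_s^t b(u_r-w_r)\dd r$ obtaining a defect of order $|t-s|^{3/2}$, and conclude by additivity/uniqueness of the sewing. The only genuine difference is technical and to your credit: to bound $\|b*L_{m,t}\|_{C^{0,1}}$ you exploit the compact support of $L$ (coming from continuity of $w$) together with $\|\nabla b\|_\infty\|L_{m,t}\|_{L^1}\lesssim\|L_{m,t}\|_{L^2}$, whereas the paper's proof uses the Young-type bound $\|b*L_{m,t}\|_{C^{0,1}}\lesssim\|b\|_{H^1}\|L_{m,t}\|_{L^2}$, which implicitly requires $b\in H^1(\R^N)$ — a condition not actually guaranteed by ``smooth and bounded.'' Your variant is thus slightly more careful under the stated hypotheses, at the cost of invoking compact support of $L$, which the paper reserves for the later identification lemma. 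Your opening paragraph about deriving the $L^\infty_t W^{1,p}_{0,x}$ regularity from an energy estimate is also a welcome detail that the paper's proof passes over silently.
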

\begin{proof}
Since $u$ is a strong solution, it suffices to show that for any $s<t\in [0,T]$ we have
\[
(\mathcal{I}A^u)_{s,t}=\int_s^t b(u_r-w_r) dr
\]
Remark first that $A^u$ does admit a sewing with values in $L^2_x$. Indeed,
\begin{align*}
   \norm{(\delta A^u)_{svt}}_{L^2_x}&=\norm{ (b*L_{v,t})(u_s)-(b*L_{v,t})(u_v)}_{L^2_x}\\
   &\lesssim \norm{b*L_{v,t}}_{C^{0,1}}\norm{u_s-u_v}_{L^2_x}\\
   &\lesssim \norm{b}_{H^1}\norm{L}_{C^{0,1/2+\epsilon}_t L^2}|t-s|^{1/2+\epsilon}\norm{u}_{C^{0,1/2}_t L^2_x}|t-s|^{1/2}
\end{align*}
for some $\epsilon>0$,  meaning we may apply the Sewing Lemma \ref{sewing}. Moreover, we have 
\begin{align*}
    &\norm{A^u_{s,t}-\int_s^tb(u_r-w_r)dr}_{L^2_x}=\norm{\int_s^t b(u_s-w_r)-b(u_r-w_r)dr}_{L^2_x}\\
    &\leq \norm{b}_{C^{0,1}}\norm{u}_{C^{0,1/2}_tL^2_x}\int_s^t|r-s|^{1/2}dr\lesssim  \norm{b}_{C^{0,1}}\norm{u}_{C^{0,1/2}_tL^2_x}|t-s|^{3/2},
\end{align*}
from which we conclude that 
\begin{align*}
    &\norm{(\mathcal{I}A^u)_{s,t}-\int_s^t b(u_r-w_r) dr}_{L^2_x}\\
    \leq& \norm{A^u_{s,t}-(\mathcal{I}A^u)_{s,t}}_{L^2_x}+\norm{A^u_{s,t}-\int_s^tb(u_r-w_r)dr}_{L^2_x}= O(|t-s|^{1+\epsilon}).
\end{align*}
Hence the function 
\[
t\to (\mathcal{I}A^u)_{0,t}-\int_0^tb(u_r-w_r)dr
\]
is constant. As it starts in zero, this concludes the claim. 
\end{proof}
\begin{lemma} \label{lem:Rob2Strong}
Let $b$ be smooth and bounded and let $w\in C^{0,\alpha}_t$ for some $\alpha > 0$ be a  path with local time $L \in C^{0,1/2+\epsilon}_t L^2_x$ for some $\epsilon> 0$. Assume $u$ to be a robustified solution, then

\begin{enumerate}
    \item \label{item:1} the sewing is weakly differentiable, i.e.,
\begin{align*}
 t\mapsto (\mathcal{I}A^u)_{t} \in W^{1,2}(0,T; L^2_x),
\end{align*}
and moreover, we find
\begin{align*}
    \partial_t \big[ (\mathcal{I}A^u)\big] \big|_t &:= \lim_{h\to 0} h^{-1}\big( (\mathcal{I}A^u)_{t+h} - (\mathcal{I}A^u)_{t} \big) = b(u_t - w_t).
\end{align*}

\item \label{item:2}
$u$ is a strong solution and for almost all $t \in I$
\begin{align*}
  \partial_t u - \Div S(\nabla u) = b(u - w)
\end{align*}
as an equation in $L^2_x$.
\end{enumerate}
\end{lemma}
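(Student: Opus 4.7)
My plan is to leverage Lemma~\ref{identification}, which already identifies the sewing with a classical Bochner integral in the smooth setting, and then simply differentiate in time. The definition of a robustified solution guarantees $u\in C^{0,1/2}_t L^2_x$, and inspecting the proof of Lemma~\ref{identification} one sees that the identification of the sewing used only this H\"older regularity of $u$ together with $b\in C^\infty_b$ and $L\in C^{0,1/2+\epsilon}_t L^2_x$; the strong-solution hypothesis was irrelevant in that step. Repeating that argument here will therefore yield
\[
(\mathcal{I}A^u)_{s,t} \;=\; \int_s^t b(u_r - w_r)\,dr
\]
as an identity in $L^2_x$ for every $0\leq s\leq t\leq T$.

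For part~\eqref{item:1}, once this identification is in hand, I would check continuity of the integrand into $L^2_x$. Since $b$ is smooth with bounded derivatives, it is globally Lipschitz, whence
\[
\norm{b(u_t-w_t)-b(u_s-w_s)}_{L^2_x}\lesssim \norm{u_t-u_s}_{L^2_x}+\abs{\Omega}^{1/2}\abs{w_t-w_s},
\]
which tends to zero by continuity of $u:I\to L^2_x$ and $w:I\to \R^N$. Hence $r\mapsto b(u_r-w_r)$ belongs to $C^0_t L^2_x$, and the classical fundamental theorem of calculus in the Bochner setting shows both that the difference quotient defining $\partial_t[(\mathcal{I}A^u)]|_t$ converges in $L^2_x$ for every $t\in I$ to $b(u_t-w_t)$, and that $(\mathcal{I}A^u)\in W^{1,2}(0,T;L^2_x)$ (the integrand is bounded in $L^2_x$ on $[0,T]$, hence in $L^2_t L^2_x$).

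For part~\eqref{item:2}, I would then differentiate the robustified equation
\[
u_t-u_0-\int_0^t \Div S(\nabla u_r)\,dr \;=\; (\mathcal{I}A^u)_{0,t}
\]
in time. By the definition of a robustified solution, $\partial_t u$ and $\Div S(\nabla u)$ both lie in $L^2_t L^2_x$, so the left-hand side is weakly differentiable in $L^2_x$ with weak derivative $\partial_t u_t-\Div S(\nabla u_t)$. By part~\eqref{item:1} the right-hand side is weakly differentiable with weak derivative $b(u_t-w_t)$. Equating the two weak derivatives yields the strong equation in $L^2_x$ for almost every $t\in I$.

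I do not anticipate a serious obstacle: the statement is essentially a consistency check verifying that, in the smooth regime, the robustified formulation collapses to the classical strong formulation. The only subtle point is confirming that the ``smooth and bounded'' hypothesis on $b$ furnishes the Lipschitz control needed to transport the $C^0_t L^2_x$-continuity of $u$ through $b$; reading ``smooth and bounded'' as $b\in C^\infty_b(\R^N)$ makes this immediate.
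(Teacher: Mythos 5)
Your proof is correct, and it is close in spirit to the paper's, but the organization of part~\eqref{item:1} is different in a way worth pointing out. The paper does \emph{not} route through the identification $(\mathcal{I}A^u)_{s,t}=\int_s^t b(u_r-w_r)\,dr$: instead it directly estimates the difference quotient of the sewing, combining the Sewing Lemma gap bound $\norm{(\mathcal{I}A^u)_{s,t}-A_{s,t}}_{L^2_x}\lesssim |t-s|^{1+\epsilon}$ with the estimate $\norm{h^{-1}A_{t,t+h}-b(u_t-w_t)}_{L^2_x}\lesssim \seminorm{b}_{C^{0,1}}\seminorm{w}_{C^{0,\alpha}_t}|h|^\alpha$, and lets $h\to 0$. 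Your approach instead re-derives the Bochner-integral representation from the proof of Lemma~\ref{identification} — and you are correct that that derivation only uses $u\in C^{0,1/2}_tL^2_x$, $b$ smooth Lipschitz, and $L\in C^{0,1/2+\epsilon}_tL^2_x$, never the strong-solution hypothesis — and then applies the fundamental theorem of calculus to the continuous $L^2_x$-valued integrand $r\mapsto b(u_r-w_r)$. Your route has the small advantage of not invoking the H\"older exponent $\alpha$ of $w$ at all (continuity of $w$ suffices for the Lipschitz estimate you write down), and it makes the membership $(\mathcal{I}A^u)\in W^{1,2}(0,T;L^2_x)$ immediate from boundedness of the integrand in $L^2_x$. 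For part~\eqref{item:2} you and the paper do exactly the same thing: differentiate both sides of the robustified equation in $L^2_x$ and equate. One caveat to keep in mind, though it affects the paper's proof equally: reading ``smooth and bounded'' literally as $b\in C^\infty_b(\R^N)$ does not by itself give $b\in H^1(\R^N)$, which the underlying sewing estimate uses via $\norm{b*L_{s,t}}_{C^{0,1}}\lesssim\norm{b}_{H^1}\norm{L_{s,t}}_{L^2}$; one should either assume $\nabla b\in L^2$ (as the paper tacitly does) or exploit the compact support of $L$ to localize $b$.
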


\begin{proof}
First we will address~\eqref{item:1}. We will not use that $u$ is a robustified solution and only exploit $u\in C^{0, 1/2}_tL^2_x$ but trace the dependents of the sewing and the regularity of the germ. In principle the statement of~\eqref{item:1} is of independent interest.

Recall that $(\mathcal{I}A^u)$ is constructed as the germ 
\begin{align*}
    A_{s,t} = \int_s^t b(u_s - w_\tau) \dd \tau = \big( b * L_{s,t} \big) (u_s).
\end{align*}
Additionally, we have as in the previous Lemma 
\begin{align*}
    \norm{(\mathcal{I}A^u)_{s,t} - A_{s,t}}_{L^2_x} \lesssim \norm{b}_{H^1} \norm{L}_{C^{0,1/2+\epsilon}_t L^2} \norm{u}_{C^{0,1/2}_t L^2_x} \abs{t-s}^{1+\epsilon}.
\end{align*}
On the other hand
\begin{align*}
   \norm{\frac{A_{t,t+h}}{h} -b(u_{t} - w_t)}_{L^2_x} &= \norm{h^{-1} \int_{t}^{t+h} b(u_{t} - w_\tau) -  b(u_t - w_t) \dd \tau}_{L^2_x} \\
   &\leq \norm{  h^{-1} \int_{t}^{t+h}   \seminorm{b}_{C^{0,1}}\abs{w_\tau - w_t}\dd \tau }_{L^2_x} \lesssim \seminorm{b}_{C^{0,1}} \seminorm{w}_{C^{0,\alpha}_t} \abs{h}^{\alpha}.
\end{align*}
Overall, division by $h$ and passing with $h \to 0$ implies thus
\begin{align*}
    \norm{\frac{(\mathcal{I}A^u)_{t+h} - (\mathcal{I}A^u)_{t}  }{h} -b(u_t-w_t)}_{L^2_x} \rightarrow  0,
\end{align*}
establishing the first claim. 
Next, we take a look at~\eqref{item:2}. By part~\eqref{item:1} the right hand side of~\eqref{eq:robustified} is weakly differentiable in $L^2_x$ and due to the regularity assumptions $\partial_t u \in L^2_tL^2_x$ and $\Div S(\nabla u) \in L^2_t L^2_x$ also the left hand side is differentiable. Therefore we may rescale~\eqref{eq:robustified} by $\abs{t-s}^{-1}$ and pass to the limit $t \to s $ to obtain
\begin{align*}
    \partial_t u - \Div S(\nabla u) = b(u-w).
\end{align*}

\end{proof}
\begin{rem}
Note that the key difference between strong and robustified solutions lies in the fact that the latter can exploit the regularization property of the local time $L$ associated with $w$. In particular, provided this local time is sufficiently regular, the definition is meaningful even in instances in which $b$ only enjoys distributional regularity as discussed in the introduction. 
\end{rem}

\section{Classical a priori bounds for strong solutions}
\label{strong solutions}
Singular potentials $b$ that do not satisfy the above growth conditions are in general inaccessible to the classical theory as seen above. However, if we restrict ourselves to regularized approximations of $b$, i.e., we assume that there exists $(b_\varepsilon)_{\varepsilon \in (0,1)} \in C^{0,1}$ such that $b_\varepsilon \to b \in L^{2q}$, then for each $\varepsilon \in (0,1)$ the classical theory is applicable and existence of strong solutions to~\eqref{regularized problem} for smooth $b$ is a classical result. It can be found e.g. in the books~\cite{MR0259693,MR0636412,MR1033498}. The objective of the present section is to trace the precise form of the a priori estimates that will be robustified in the next section. 
\begin{theorem} \label{thm:Strong solutions}
Let $p \in (1,\infty)$, $u_0^\varepsilon \in L^2_x \cap W^{1,p}_{0,x}$ and $b_\varepsilon \in C^{0,1}$. Then there exists a unique solution $u^\varepsilon$ solving~
\begin{alignat}{2}\label{regularized problem II}
\begin{aligned}
    \partial_t u^\epsilon-\Div S(\nabla u^\epsilon) &=b_\epsilon(u^\epsilon-w) &&\quad \text{ on } \Omega\times [0,T]\\
        u^\epsilon|_\Omega&=0 &&\quad \text{ on }  \partial\Omega\times [0,T]\\
        u^\epsilon(0, \cdot)&=u_0^\varepsilon &&\quad \text{ on }   \Omega.
\end{aligned}
\end{alignat}
in the strong sense of Definition~\ref{def:ClassicalSolution}.
Moreover, the following a priori bounds are valid
\begin{align}\label{eq:Strong solutionsA}
\begin{aligned}
 \sup_{t \leq T} \frac{1}{p} \norm{ \nabla u_t^\varepsilon}_{L^p_x}^p &+    \int_0^T \norm{\partial_t u_t^\varepsilon}_{L^2_x}^2 + \norm{\Div S(\nabla u^\varepsilon_t)}_{L^2_x}^2 \dd t \\
   &\leq  \frac{3}{p} \norm{\nabla u_0^\varepsilon}_{L^p_x}^p + 3 \int_0^T \int_\Omega \abs{b_\varepsilon(u_t^{\varepsilon}(x) - w_t)}^2 \dd x \dd t,
   \end{aligned}
\end{align}
and
\begin{align}\label{eq:Strong solutionsB}
   \sup_{t\leq T} \frac{1}{4} \norm{u_t^{\varepsilon}}_{L^2_x}^2 + \int_0^T \norm{\nabla u_t^{\varepsilon}}_{L^p_x}^p \dd t \leq  \norm{u_0^{\varepsilon}}_{L^2_x}^2 + 2 T \int_0^T \int_\Omega \abs{b_\varepsilon(u_t^{\varepsilon}(x) - w_t)}^2 \dd x \dd t.
\end{align}
\end{theorem}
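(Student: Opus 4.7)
The plan is to treat existence and uniqueness by classical monotone operator techniques applicable to the Lipschitz nonlinearity $b_\varepsilon$, and to derive the two a priori bounds by exploiting the additional regularity afforded by strong solutions through well-chosen test functions. For existence, I would set up a Galerkin approximation on the eigenfunctions of the Dirichlet Laplacian, solve the resulting ODE system globally using the Lipschitz bound on $b_\varepsilon$, derive uniform bounds (mimicking the computations below, which are legitimate at the Galerkin level since the projected solutions are smooth in time), and pass to the limit via Minty's monotonicity trick to identify the limit of $-\Div S(\nabla u^{\varepsilon,n})$. Uniqueness follows from monotonicity of $-\Div S$ and the Lipschitz property of $b_\varepsilon$ by a Gr\"onwall argument applied to $\tfrac12 \tfrac{d}{dt}\norm{u^1-u^2}_{L^2_x}^2$ for two solutions $u^1, u^2$ sharing the same initial datum.

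For the bound \eqref{eq:Strong solutionsA} I would combine two identities. First, testing \eqref{def:StrongSolutionEq} against $\partial_t u^\varepsilon \in L^2_tL^2_x$ together with the convex-analytic chain rule $\frac{d}{dt}\mathcal{J}(u^\varepsilon_t) = -(\Div S(\nabla u^\varepsilon_t), \partial_t u^\varepsilon_t)$ for $\mathcal{J}(v)=\tfrac{1}{p}\norm{\nabla v}_{L^p_x}^p$ (valid because $-\Div S(\nabla\cdot)=\partial\mathcal{J}$ is the subdifferential of the convex functional $\mathcal{J}$ and $\partial_t u^\varepsilon \in L^2_tL^2_x$) yields
\[
\int_0^t \norm{\partial_s u^\varepsilon_s}_{L^2_x}^2 \dd s + \tfrac{1}{p}\norm{\nabla u^\varepsilon_t}_{L^p_x}^p = \tfrac{1}{p}\norm{\nabla u^\varepsilon_0}_{L^p_x}^p + \int_0^t (b_\varepsilon(u^\varepsilon_s - w_s), \partial_s u^\varepsilon_s) \dd s,
\]
and Young's inequality $ab \leq \tfrac12 a^2 + \tfrac12 b^2$ on the right-hand side produces $\sup_t \tfrac{1}{p}\norm{\nabla u^\varepsilon_t}_{L^p_x}^p \leq \tfrac{1}{p}\norm{\nabla u^\varepsilon_0}_{L^p_x}^p + \tfrac12 \int_0^T\norm{b_\varepsilon(u^\varepsilon-w)}_{L^2_x}^2 \dd s$. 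Second, squaring \eqref{def:StrongSolutionEq} pointwise and integrating over $\Omega_T$ gives
\[
\norm{\partial_t u^\varepsilon}_{L^2_tL^2_x}^2 + \norm{\Div S(\nabla u^\varepsilon)}_{L^2_tL^2_x}^2 = 2 \int_0^T (\partial_t u^\varepsilon, \Div S(\nabla u^\varepsilon))_{L^2_x} \dd t + \norm{b_\varepsilon(u^\varepsilon-w)}_{L^2_tL^2_x}^2,
\]
and the cross term evaluates through the same chain rule to $-2[\mathcal{J}(u^\varepsilon_T)-\mathcal{J}(u^\varepsilon_0)] \leq \tfrac{2}{p}\norm{\nabla u^\varepsilon_0}_{L^p_x}^p$. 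Summing the two displays delivers exactly \eqref{eq:Strong solutionsA} with the stated constants $3/p$ and $3$.

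For the bound \eqref{eq:Strong solutionsB} I would instead test against $u^\varepsilon$ itself, which is admissible by the Lions--Magenes chain rule since $u^\varepsilon \in L^p_tW^{1,p}_{0,x}$ and $\partial_t u^\varepsilon \in L^2_tL^2_x \hookrightarrow L^{p'}_tW^{-1,p'}_x$. Using $\int_\Omega S(\nabla u^\varepsilon):\nabla u^\varepsilon \dd x = \norm{\nabla u^\varepsilon}_{L^p_x}^p$ and $(\partial_t u^\varepsilon, u^\varepsilon) = \tfrac12 \tfrac{d}{dt}\norm{u^\varepsilon}_{L^2_x}^2$, one obtains
\[
\tfrac12 \norm{u^\varepsilon_t}_{L^2_x}^2 + \int_0^t \norm{\nabla u^\varepsilon_s}_{L^p_x}^p \dd s = \tfrac12 \norm{u^\varepsilon_0}_{L^2_x}^2 + \int_0^t (b_\varepsilon(u^\varepsilon_s - w_s), u^\varepsilon_s) \dd s.
\]
Cauchy--Schwarz in space and then in time yields $\int_0^t (b_\varepsilon(u^\varepsilon-w), u^\varepsilon) \dd s \leq T^{1/2}\bigl(\int_0^T\norm{b_\varepsilon(u^\varepsilon-w)}_{L^2_x}^2 \dd s\bigr)^{1/2} \sup_{s\leq t}\norm{u^\varepsilon_s}_{L^2_x}$, and a further Young inequality absorbs $\tfrac14 \sup_s \norm{u^\varepsilon_s}_{L^2_x}^2$ into the left-hand side to conclude \eqref{eq:Strong solutionsB}. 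The principal technical subtlety I anticipate throughout is the convex chain rule used in bound \eqref{eq:Strong solutionsA}; should the appeal to Brezis' theory be deemed unsatisfactory, I would fall back on deriving the identity at the Galerkin level where smoothness makes it automatic, and transfer it to the limit via weak $L^2$ lower semicontinuity of $\mathcal{J}$, which is enough for the a priori bound to go through.
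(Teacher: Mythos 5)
Your proposal is correct and follows essentially the same route as the paper: Galerkin approximation plus Minty for existence, monotonicity (with the Lipschitz property of $b_\varepsilon$) for uniqueness, testing against $u^\varepsilon$ with H\"older--Young for \eqref{eq:Strong solutionsB}, and the chain-rule identity $-\bigl(\Div S(\nabla u^\varepsilon),\partial_t u^\varepsilon\bigr)=\tfrac{1}{p}\tfrac{d}{dt}\norm{\nabla u^\varepsilon}_{L^p_x}^p$ as the crux of \eqref{eq:Strong solutionsA}. The one small organizational difference is in deriving \eqref{eq:Strong solutionsA}: the paper squares the equation once, expands, and evaluates the cross term by the chain rule, which yields the single energy \emph{equality}
\[
\frac{2}{p}\norm{\nabla u^\varepsilon_t}_{L^p_x}^p+\int_0^t\norm{\partial_t u^\varepsilon_s}_{L^2_x}^2+\norm{\Div S(\nabla u^\varepsilon_s)}_{L^2_x}^2\dd s=\frac{2}{p}\norm{\nabla u^\varepsilon_0}_{L^p_x}^p+\int_0^t\norm{b_\varepsilon(u^\varepsilon_s-w_s)}_{L^2_x}^2\dd s,
\]
simultaneously controlling all three quantities; you instead split this into two passes (testing against $\partial_t u^\varepsilon$, then squaring) and add the results. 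Your decomposition is redundant but harmless: your testing identity is contained in the squared equality (drop $\norm{\Div S(\nabla u^\varepsilon)}^2_{L^2_x}$), and both rely on exactly the same chain rule, so the constants come out the same. Your fallback plan of establishing the chain rule at the Galerkin level and passing to the limit via weak lower semicontinuity of $\mathcal{J}$ is also exactly the standard justification one would invoke here.
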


\begin{proof}
The existence of a unique strong solution $u^\varepsilon$ to~\eqref{regularized problem II} is standard, see e.g.~\cite{MR0259693,MR0636412,MR1033498}, so let us just recall the main steps employed: In order to obtain existence, one first performs a Galerkin projection to the problem. The existence of solutions to the so obtained finite dimensional problem is done through a fixed point theorem. Next, using monotonicity of the $p$-Laplace operator, one establishes a priori bounds uniformly along solutions to the projected problems. By Banach-Alaoglu, one extracts a weak-* convergent subsequence, whose limit one has to identify as a solution to the problem. Identifying the limit in the non-linearity $b$ is done thanks to the Aubin-Lions Lemma \ref{lem:Aubin-Lions}. Identifying the limit in the $p$-Laplace operator is done with Minty's Lemma \ref{minty}. Finally, uniqueness is obtained by monotonicity of the $p$-Laplace operator.

We will argue on the weak~\eqref{eq:Strong solutionsB} and strong~\eqref{eq:Strong solutionsA} energy estimates separately.

The weak energy estimate~\eqref{eq:Strong solutionsB} naturally occurs when multiplying~\eqref{regularized problem II} by $u^\varepsilon$. Integration in space and integration by parts imply
\begin{align*}
    \partial_t \left( \frac{1}{2} \norm{u_t^\varepsilon}_{L^2_x}^2 \right) + \norm{\nabla u_t^\varepsilon}_{L^p_x}^p = \int_{\Omega}  b_\varepsilon(u_t^\varepsilon(x) - w_t) \cdot u_t^\varepsilon(x) \dd x.
\end{align*}
Integration in time, together with H\"older's and Young's inequalities result in
\begin{align}
\begin{aligned} \label{eq:EstimateWeak01}
    \frac{1}{2} \norm{u_t^\varepsilon}_{L^2_x}^2 &+ \int_0^t \norm{\nabla u_s^\varepsilon}_{L^p_x}^p \dd s =  \frac{1}{2} \norm{u_0^\varepsilon}_{L^2_x}^2 + \int_0^t \int_{\Omega}  b_\varepsilon(u_s^\varepsilon(x) - w_s) \cdot u_s^\varepsilon(x) \dd x \dd s \\
    & \leq   \frac{1}{2} \norm{u_0^\varepsilon}_{L^2_x}^2 + t\int_0^t \int_{\Omega}  \abs{b_\varepsilon(u_s^\varepsilon(x) - w_s)}^2 \dd x \dd s + \frac{1}{4t} \int_0^t \norm{u_s^\varepsilon}_{L^2_x}^2  \dd s.
    \end{aligned}
\end{align}
Finally, take the supremum in $t$ over $(0,T)$ to conclude
\begin{align*}
    \sup_{t \in (0,T)} \frac{1}{4} \norm{u_t^\varepsilon}_{L^2_x}^2 
    \leq \frac{1}{2} \norm{u_0^\varepsilon}_{L^2_x}^2 + T\int_0^T \int_{\Omega}  \abs{b_\varepsilon(u_s^\varepsilon(x) - w_s)}^2 \dd x \dd s.
\end{align*}
This and~\eqref{eq:EstimateWeak01} establish the inequality~\eqref{eq:Strong solutionsB}.

The strong energy estimate follows from squaring both sides of~\eqref{regularized problem II} and integration in space and time
\begin{align*}
    \int_0^t \int_{\Omega} \abs{\partial_t u_s^\varepsilon - \Div S(\nabla u_s^\varepsilon)}^2 \dd x \dd s = \int_0^t \int_{\Omega} \abs{b_\varepsilon(u_s^\varepsilon(x) - w_s)}^2 \dd x \dd s.
\end{align*}
Note that, due to integration by parts,
\begin{align*}
   &-2 \int_0^t \int_{\Omega}  \partial_t \cdot u_s^\varepsilon \,\Div S(\nabla u_s^\varepsilon) \dd x \dd s = 2 \int_0^t \int_{\Omega}  \partial_t \nabla u_s^\varepsilon : S(\nabla u_s^\varepsilon) \dd x \dd s \\
   &\hspace{2em} = \frac{2}{p} \int_0^t \partial_t \norm{\nabla u_s^\varepsilon}_{L^p_x}^p \dd s = \frac{2}{p} \left(\norm{\nabla u_t^\varepsilon}_{L^p_x}^p - \norm{\nabla u_0^\varepsilon}_{L^p_x}^p\right).
\end{align*}
Therefore, we obtain
\begin{align*}
    &\frac{2}{p} \norm{\nabla u_t^\varepsilon}_{L^p_x}^p + \int_0^t \norm{\partial_t u_s^\varepsilon}_{L^2_x}^2 +\norm{ \Div S(\nabla u_s^\varepsilon)}_{L^2_x}^2  \dd s \\
    &\hspace{2em} =\frac{2}{p} \norm{\nabla u_0^\varepsilon}_{L^p_x}^p + \int_0^t \int_{\Omega} \abs{b_\varepsilon(u_s^\varepsilon(x) - w_s)}^2 \dd x \dd s.
\end{align*}
The claim~\eqref{eq:Strong solutionsA} immediately follows after taking the supremum in $t$ over $(0,T)$.
\end{proof}

\begin{rem}
The weak energy estimate~\eqref{eq:Strong solutionsB} can be generalized to less restrictive assumptions on~$b_\varepsilon$, e.g. it is sufficient to assume $b_\varepsilon(u^\varepsilon- w) \in \big( C_t^0 L^2_x \cap L^p_t W^{1,p}_{0,x} \big)'$.
\end{rem}

\begin{cor}\label{cor:HoelderReg}
Let the assumptions of Theorem~\ref{thm:Strong solutions} be satisfied. Then
\begin{align} \label{eq:HoelderReg}
\norm{u^\varepsilon}_{C^{0,1/2}_t L^2_x}^2 \leq 4 \norm{u_0^\varepsilon}_{L^2_x}^2 +  \frac{3}{p} \norm{\nabla u_0^\varepsilon}_{L^p_x}^p + (3 + 8T) \int_0^T \int_\Omega \abs{b_\varepsilon(u_t^{\varepsilon}(x) - w_t)}^2 \dd x \dd t.
\end{align}
\end{cor}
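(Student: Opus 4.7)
The plan is to deduce the Hölder regularity directly from the strong energy estimate~\eqref{eq:Strong solutionsA}, which provides control of $\partial_t u^\varepsilon$ in $L^2_t L^2_x$. Since we interpret the norm on the inhomogeneous space $C^{0,1/2}_t L^2_x$ as $\norm{u}_{C^{0,1/2}_t L^2_x}^2 = \sup_{t\in I}\norm{u_t}_{L^2_x}^2 + \seminorm{u}_{C^{0,1/2}_t L^2_x}^2$, it suffices to bound each of these two contributions separately and then sum.

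First I would handle the supremum part: it follows immediately from \eqref{eq:Strong solutionsB} that
\[
\sup_{t\in I}\norm{u_t^\varepsilon}_{L^2_x}^2 \leq 4 \norm{u_0^\varepsilon}_{L^2_x}^2 + 8 T \int_0^T\int_\Omega \abs{b_\varepsilon(u_t^\varepsilon(x)-w_t)}^2 \dd x \dd t.
\]
Next I would control the Hölder seminorm. Since $u^\varepsilon$ is a strong solution, $\partial_t u^\varepsilon \in L^2_t L^2_x$, so for any $0 \leq s < t \leq T$ the fundamental theorem of calculus gives $u^\varepsilon_t - u^\varepsilon_s = \int_s^t \partial_\tau u^\varepsilon_\tau \dd \tau$ as an identity in $L^2_x$. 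A direct application of the Cauchy–Schwarz inequality then yields
\[
\norm{u_t^\varepsilon - u_s^\varepsilon}_{L^2_x} \leq \abs{t-s}^{1/2} \left(\int_s^t \norm{\partial_\tau u_\tau^\varepsilon}_{L^2_x}^2 \dd \tau\right)^{1/2} \leq \abs{t-s}^{1/2} \norm{\partial_t u^\varepsilon}_{L^2_t L^2_x},
\]
so that $\seminorm{u^\varepsilon}_{C^{0,1/2}_t L^2_x}^2 \leq \norm{\partial_t u^\varepsilon}_{L^2_t L^2_x}^2$, and the latter is bounded by \eqref{eq:Strong solutionsA} by $\frac{3}{p}\norm{\nabla u_0^\varepsilon}_{L^p_x}^p + 3\int_0^T\int_\Omega \abs{b_\varepsilon(u_t^\varepsilon - w_t)}^2 \dd x \dd t$.

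Adding the two contributions yields exactly the claimed constants $4$, $3/p$, and $3+8T$, so the corollary follows by summation of the two estimates. There is no real obstacle here — the only conceptual step is recognizing that the Hölder-in-time regularity is a free consequence of the $L^2$-in-time regularity of $\partial_t u^\varepsilon$ via Cauchy–Schwarz, which is precisely the embedding $W^{1,2}(I;L^2_x) \hookrightarrow C^{0,1/2}(I;L^2_x)$ applied to the strong solution.
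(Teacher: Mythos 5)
Your proof is correct and is essentially identical to the paper's own argument: both decompose the $C^{0,1/2}_t L^2_x$ norm into the $L^\infty_t L^2_x$ part controlled by \eqref{eq:Strong solutionsB} and the Hölder seminorm controlled via the fundamental theorem of calculus, Cauchy--Schwarz, and \eqref{eq:Strong solutionsA}, and then add. (Incidentally, your version corrects a minor typographical slip in the paper's intermediate inequality, where $\norm{u_0^\varepsilon}_{L^2_x}$ appears without its square, though the final stated bound in the corollary carries the square as you have it.)
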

\begin{proof}
Due to~\eqref{eq:Strong solutionsB} it holds
\begin{align}\label{eq:Inf1}
    \norm{u^\varepsilon}_{L^\infty_t L^2_x}^2 \leq 4 \norm{u_0^\varepsilon}_{L^2_x} + 8T \int_0^T \int_\Omega \abs{b_\varepsilon( u_t^\varepsilon(x) - w_t)}^2 \dd x \dd t.
\end{align}
The fundamental theorem of calculus and H\"older's inequality reveal
\begin{align*}
    \norm{u_t^\varepsilon - u_s^\varepsilon}_{L^2_x}^2 = \norm{\int_s^t \partial_t u_r^\varepsilon \dd r}_{L^2_x}^2 \leq \abs{t-s} \int_s^t \norm{\partial_t u_r^\varepsilon}_{L^2_x}^2 \dd r.
\end{align*}
The estimate~\eqref{eq:Strong solutionsA} bounds
\begin{align}\label{eq:Inf2}
    \seminorm{u^\varepsilon}_{C^{0,1/2}_t L^2_x}^2 \leq \frac{3}{p} \norm{\nabla u_0^\varepsilon}_{L^p_x}^p + 3 \int_0^T \int_\Omega \abs{b_\varepsilon(u_t^{\varepsilon}(x) - w_t)}^2 \dd x \dd t.
\end{align} 
Adding~\eqref{eq:Inf1} and~\eqref{eq:Inf2} verifies the claim.
\end{proof}


\section{Robustified a priori bounds for the mollified problem}
\label{a priori section}
Given a singular potential $b\in L^{2q}$, we have seen in the previous section that for a mollification $b_\epsilon=b*\rho_\epsilon$, we obtain a unique strong solution $u^\varepsilon$ to
\begin{equation}
    \begin{split}
        \partial_t u^\epsilon- \Div S(\nabla u^\epsilon)  = b_\epsilon(u^\epsilon-w).
        \label{a priori start}
    \end{split}
\end{equation}

 In the following, we show that in harnessing the regularizing effect due to the perturbing path $w$, robust a priori bounds uniformly in $\epsilon>0$ can be obtained. More precisely, we establish the a priori bound
\[
\norm{u^\epsilon}_{L^\infty_t W^{1, p}_x}^p+\norm{u^\epsilon}_{C^{0, 1/2}_tL^2_x}^2\leq C_{T, \Omega}(\norm{u_0}_{L^2_x}^2+\norm{\nabla u_0}_{L^p_x}^p +\norm{b}_{L^{2q}}^4\norm{L}_{C^{0, \gamma}_tW^{1, r'}}^2)
\]
uniformly in $\epsilon>0$. Towards this end, we first show that the right-hand side of \eqref{eq:Strong solutionsA}, \eqref{eq:Strong solutionsB} and \eqref{eq:HoelderReg} can be robustified in the sense of the following identification. 
\begin{lemma}
\label{identificationII} Let $r\in [1,\infty)$ and $\gamma > 1/2$. Suppose $w:[0, T]\to \R^N$ is continuous and admits a local time $L$ such that $L\in C^{0, \gamma}_tW^{1, r'}$ and  $b\in L^{2q}$ for $q\in [r, \infty)$. Then for any $\epsilon>0$ fixed, we have
\begin{equation}
\int_0^T\int_\Omega |b_\epsilon(u^\epsilon_r-w_r)|^2 \dd x \dd r=(\mathcal{I}A^\epsilon)_{0,T}
    \label{identificationIII}
\end{equation}
where $\mathcal{I}$ denotes the sewing of the germ
\[
A^\epsilon_{s,t}=\int_\Omega (b_\epsilon^2*L_{s,t})(u^\epsilon_s) \dd x,
\]
and where we used the shorthand notation $b_\epsilon^2(u):= |b_\epsilon(u)|^2$.  Moreover, there holds the a priori bound
\[
|(\mathcal{I}A^\epsilon)_{s,t}|\leq C_\Omega \norm{b}_{L^{2q}}^2\norm{L}_{C^{0, \gamma}_tW^{1, r'}}(1+\norm{u^\epsilon}_{C^{0, 1/2}_tL^2_x})|t-s|^\gamma.
\]
 
\end{lemma}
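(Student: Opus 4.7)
The proof will closely mimic the template of Lemma~\ref{identification}: first check that the germ $A^\epsilon$ satisfies the hypotheses of the Sewing Lemma~\ref{sewing}, then verify that $A^\epsilon_{s,t}$ differs from the target Lebesgue integral by $O(|t-s|^{3/2})$, so that by the standard "constant increment" argument the sewing $\mathcal{I}A^\epsilon$ must coincide with $t\mapsto \int_0^t\int_\Omega b_\epsilon^2(u^\epsilon_r-w_r)\,\dd x\,\dd r$. The key conceptual observation is that by the occupation times formula, $A^\epsilon_{s,t} = \int_\Omega \int_s^t b_\epsilon^2(u^\epsilon_s(x)-w_r)\,\dd r\,\dd x$, so $A^\epsilon$ is exactly the target integral but with the \emph{frozen} initial value $u^\epsilon_s$; switching $u^\epsilon_s\rightsquigarrow u^\epsilon_r$ is what costs the extra $|t-s|^{1/2}$ via the Hölder regularity of $u^\epsilon$.

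For sewability, using additivity of the local time I would write
\[
\delta A^\epsilon_{svt}=\int_\Omega\bigl[(b_\epsilon^2*L_{v,t})(u^\epsilon_s(x))-(b_\epsilon^2*L_{v,t})(u^\epsilon_v(x))\bigr]\,\dd x.
\]
The Young inequality \eqref{young} applied to $b_\epsilon^2\in L^q$ (with $\|b_\epsilon^2\|_{L^q}\leq \|b\|_{L^{2q}}^2$) and $L_{v,t}\in W^{1,q'}$ then produces $\|b_\epsilon^2 * L_{v,t}\|_{C^{0,1}}\lesssim \|b\|_{L^{2q}}^2\|L_{v,t}\|_{W^{1,q'}}$. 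To bring this back to the assumed regularity $L\in C^{0,\gamma}_tW^{1,r'}$, I invoke that $\mathrm{supp}(L_{v,t})$ is contained in the (bounded) range $w([0,T])$, so Hölder's inequality upgrades $W^{1,r'}$ into $W^{1,q'}$ with a constant $C_w$ depending only on the range of $w$. Combining with Cauchy--Schwarz in $\Omega$ and the Hölder regularity $u^\epsilon\in C^{0,1/2}_tL^2_x$ from Corollary~\ref{cor:HoelderReg} yields
\[
|\delta A^\epsilon_{svt}|\lesssim |\Omega|^{1/2}\|b\|_{L^{2q}}^2\|L\|_{C^{0,\gamma}_tW^{1,r'}}\|u^\epsilon\|_{C^{0,1/2}_tL^2_x}|t-s|^{\gamma+1/2}.
\]
Because $\gamma>1/2$, the exponent exceeds $1$ and the sewing lemma applies, simultaneously giving $|(\mathcal{I}A^\epsilon-A^\epsilon)_{s,t}|$ of the same order. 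For the identification step, smoothness of $b_\epsilon$ (at fixed $\epsilon$) plus Cauchy--Schwarz bounds the difference $A^\epsilon_{s,t}-\int_s^t\int_\Omega b_\epsilon^2(u^\epsilon_r-w_r)\,\dd x\,\dd r$ by $C_\epsilon|\Omega|^{1/2}\|u^\epsilon\|_{C^{0,1/2}_tL^2_x}|t-s|^{3/2}$, and the usual argument concludes.

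For the a priori estimate on $|(\mathcal{I}A^\epsilon)_{s,t}|$, I split as $|A^\epsilon_{s,t}|+|(\mathcal{I}A^\epsilon-A^\epsilon)_{s,t}|$. The first term is controlled directly by Young in the form $L^q\times L^{q'}\hookrightarrow L^\infty$, yielding $|A^\epsilon_{s,t}|\lesssim |\Omega|\|b\|_{L^{2q}}^2\|L\|_{C^{0,\gamma}_tW^{1,r'}}|t-s|^\gamma$; the second term already has the stronger $|t-s|^{\gamma+1/2}$ dependence from the sewing, which I collapse to $T^{1/2}|t-s|^\gamma$. Summing delivers the announced bound.

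\textbf{Expected main obstacle.} The calculation itself is routine once the right decomposition is in place; the only subtle point I anticipate, which does not arise in Lemma~\ref{identification}, is the passage from the assumed spatial regularity $W^{1,r'}$ of $L$ to the regularity $W^{1,q'}$ required to absorb $b_\epsilon^2\in L^q$ via Young's inequality. This is precisely why the hypothesis $q\geq r$ is imposed and why the continuity of $w$ (making $L_{s,t}$ compactly supported in $\mathbb{R}^N$) enters in an essential, if easily overlooked, way.
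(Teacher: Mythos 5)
Your proposal matches the paper's proof essentially step for step: the same germ decomposition $\delta A^\epsilon_{svt}$, the same application of Young's inequality \eqref{young} combined with the compact-support upgrade from $W^{1,r'}$ to $W^{1,q'}$, the same use of fixed-$\epsilon$ smoothness of $b_\epsilon$ to bound $A^\epsilon_{s,t}-\int_s^t\int_\Omega b_\epsilon^2(u^\epsilon_r-w_r)\,\dd x\,\dd r$ by $O(|t-s|^{3/2})$, and the same $|A^\epsilon_{s,t}|+|(\mathcal{I}A^\epsilon-A^\epsilon)_{s,t}|$ split for the a priori bound. Your closing observation about the role of $q\geq r$ and the compact support of $L$ is exactly the point the paper flags inline, so this is a faithful reconstruction.
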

\begin{proof}
Recall that by Corollary \ref{cor:HoelderReg}, we have $u^\epsilon\in C^{0, 1/2}_tL^2_x$ for $\epsilon>0$ fixed. The first part of the claim is established similarly to Lemma \ref{identification}, the main difference being that we exploit the regularity gained from the local time $L$ in order to obtain the a priori bound in the second part of the claim. Let us start by remarking that for $\epsilon>0$ fixed, $A^\epsilon$ does indeed admit a sewing as 
\begin{align*}
    |\delta A^\epsilon_{s,r,t}|&\leq \int_\Omega | (b_\epsilon^2*L_{r,t})(u^\epsilon_s)- (b_\epsilon^2*L_{r,t})(u^\epsilon_r)| \dd x\\
    &\leq \norm{b_\epsilon^2*L_{r,t}}_{C^{0,1}}\norm{u^\epsilon_r-u^\epsilon_s}_{L^1_x}\\
    &\lesssim \norm{b_\epsilon^2}_{L^q}\norm{L_{r,t}}_{W^{1, q'}}\norm{u^\epsilon}_{C^{0, 1/2}_tL^2_x}|r-s|^{1/2}\\
    &\lesssim \norm{b}_{L^{2q}}^2\norm{L}_{C^{\gamma}_tW^{1, r'}}\norm{u^\epsilon}_{C^{0, 1/2}_tL^2_x}|r-s|^{1/2}|t-r|^{\gamma},
\end{align*}
where we exploited that due to continuity of $w$, $(L_t)_t$ is of compact support uniformly in $t\in [0,T]$ and thus $\norm{L_t}_{W^{1, q'}}\lesssim \norm{L_t}_{W^{1, r'}}$ . Moreover, note that for $\epsilon>0$ fixed, we have
\begin{align*}
    \left| A^\epsilon_{s,t}-\int_s^t\int_\Omega b_\epsilon^2(u^\epsilon_r-w_r) \dd x \dd r \right|&=\left|\int_s^t \int_\Omega b^2_\epsilon(u^\epsilon_s-w_r)-b^2_\epsilon(u^\epsilon_r-w_r) \dd x \dd r\right|\\
    &\lesssim \norm{b_\epsilon^2}_{C^1}\norm{u^\epsilon}_{C^{0, 1/2}_tL^2_x}\int_s^t|r-s|^{1/2} \dd r\\
    &=\norm{b_\epsilon^2}_{C^1}\norm{u^\epsilon}_{C^{0, 1/2}_tL^2_x}|t-s|^{3/2}.
\end{align*}
Similar to Lemma \ref{identification}, we  can thus conclude that indeed
\[
\int_0^T\int_\Omega b_\epsilon^2(u^\epsilon_r-w_r) \dd x \dd r=(\mathcal{I}A^\epsilon)_{0,T}
\]
for any $\epsilon>0$ fixed. Moreover, exploiting the a priori bounds that come with the Sewing Lemma \ref{sewing}, we infer 
\[
|(\mathcal{I}A^\epsilon)_{s,t}|\leq |A^\epsilon_{s,t}|+|(\mathcal{I}A^\epsilon)_{s,t}-A^\epsilon_{s,t}|\lesssim  \norm{b}_{L^{2q}}^2\norm{L}_{C^{\gamma}_tW^{1, r'}}(1+\norm{u^\epsilon}_{C^{0, 1/2}_tL^2_x}).
\]
where we used that 
\[
|A^\epsilon_{s,t}|\lesssim \norm{b^2_\epsilon*L_{s,t}}_{L^\infty}\lesssim \norm{b}_{L^{2q}}^2\norm{L}_{C^{0, \gamma}_tL^{r'}}|t-s|^\gamma,
\]
which completes the proof. 
\end{proof}
\begin{rem}
The sewing enables the local time to regularize the interplay of $b_\varepsilon$ and $u^\varepsilon$. In particular the square $\abs{b_\varepsilon(u^\varepsilon - w)}^2$ only acts on $b_\varepsilon$ and not on $u^\varepsilon$. Therefore, it is possible to work in the $L^1_x$ framework for $u^\varepsilon$. Indeed, a short inspection of the proof of Lemma~\ref{identificationII} shows that H\"older regularity of $u^\varepsilon$ as a $L^1_x$-valued function is sufficient for the identification. Moreover, upon replacing $u^\epsilon$ by a generic function $v\in C^\alpha_tL^1_x$, the identification \eqref{identificationIII} holds provided $\alpha+\gamma>1$. 
\end{rem}

\begin{cor} \label{cor:UnifiedBounds}
 Let $r\in [1,\infty)$ and $\gamma > 1/2$. Suppose $w:[0, T]\to \R^N$ is continuous and admits a local time $L$ such that $L\in C^{0, \gamma}_tW^{1, r'}$ and  $b\in L^{2q}$ for $q\in [r, \infty)$. Let $u^\epsilon$ be the unique solution to \eqref{regularized problem II} of Theorem \ref{thm:Strong solutions} associated to the mollification $b_\epsilon$ of $b$. Then we have the a priori bound 
\begin{equation}
\begin{split}
& \norm{ \nabla u^\varepsilon}_{L^\infty_t L^p_x}^p + \norm{u^\varepsilon}_{C^{0,1/2}_t L^2_x}^2 \lesssim \norm{u_0}_{L^2_x}^2+\norm{\nabla u_0}_{L^p_x}^p + \norm{b}_{L^{2r}}^4\norm{L}_{C^{0, \gamma}_tW^{1, r'}}^2.
    \label{a priori II}
\end{split}
\end{equation}
\end{cor}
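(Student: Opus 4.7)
My plan is to combine the classical a priori bounds from Theorem~\ref{thm:Strong solutions} and Corollary~\ref{cor:HoelderReg} with the robustified identification of Lemma~\ref{identificationII}, and then close the resulting estimate through a Young-type absorption argument.

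First I would observe that since $b_\epsilon \in C^{0,1}$, Theorem~\ref{thm:Strong solutions} yields a strong solution $u^\epsilon$, and Corollary~\ref{cor:HoelderReg} gives $u^\epsilon \in C^{0,1/2}_t L^2_x$, so the hypotheses of Lemma~\ref{identificationII} are met. Applying that lemma to the integral $\int_0^T \int_\Omega |b_\epsilon(u^\epsilon_t - w_t)|^2 \dd x \dd t$ --- the \emph{single} quantity that appears on the right-hand side of all of \eqref{eq:Strong solutionsA}, \eqref{eq:Strong solutionsB} and \eqref{eq:HoelderReg} --- produces the key bound
\[
\int_0^T \int_\Omega |b_\epsilon(u^\epsilon_t - w_t)|^2 \dd x \dd t \;\lesssim\; \|b\|_{L^{2q}}^2 \,\|L\|_{C^{0,\gamma}_t W^{1,r'}} \bigl(1 + \|u^\epsilon\|_{C^{0,1/2}_t L^2_x}\bigr),
\]
with implicit constant depending on $T$ and $\Omega$ but independent of $\epsilon$.

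Inserting this estimate into \eqref{eq:HoelderReg} produces a self-referential inequality of the schematic form $A^2 \leq C_0 + C_1(1+A)$, where $A := \|u^\epsilon\|_{C^{0,1/2}_t L^2_x}$, $C_0 := \|u_0\|_{L^2_x}^2 + \|\nabla u_0\|_{L^p_x}^p$ and $C_1 := \|b\|_{L^{2q}}^2 \|L\|_{C^{0,\gamma}_t W^{1,r'}}$. Since the unknown $A$ enters only linearly on the right but quadratically on the left, a standard Young's inequality $C_1 A \leq \tfrac12 A^2 + \tfrac12 C_1^2$ absorbs the $A$-term into the left-hand side and delivers
\[
\|u^\epsilon\|_{C^{0,1/2}_t L^2_x}^2 \;\lesssim\; \|u_0\|_{L^2_x}^2 + \|\nabla u_0\|_{L^p_x}^p + \|b\|_{L^{2q}}^4 \|L\|_{C^{0,\gamma}_t W^{1,r'}}^2,
\]
uniformly in $\epsilon$. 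Plugging the same Step~1 bound back into the strong energy estimate \eqref{eq:Strong solutionsA} and using the control on $A$ just obtained yields the analogous bound for $\|\nabla u^\epsilon\|_{L^\infty_t L^p_x}^p$; summing the two gives \eqref{a priori II}.

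The only genuine obstacle is the self-referential character of the estimate introduced by the sewing bound: the very norm we wish to control appears back on the right-hand side. However, since the classical Hölder estimate from Corollary~\ref{cor:HoelderReg} is quadratic in $\|u^\epsilon\|_{C^{0,1/2}_t L^2_x}$ while the sewing bound only contributes this norm linearly, a single application of Young's inequality closes the loop without any circular bootstrapping.
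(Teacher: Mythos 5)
Your argument is correct and follows the paper's proof essentially step by step: apply Lemma~\ref{identificationII} to robustify the right-hand side of~\eqref{eq:HoelderReg} and~\eqref{eq:Strong solutionsA}, then close the resulting quadratic-in-$A$, linear-in-$A$ inequality. The only cosmetic difference is that you absorb via Young's inequality $C_1 A \leq \tfrac12 A^2 + \tfrac12 C_1^2$ whereas the paper invokes its algebraic Lemma~\ref{app:Algebraic} (completing the square); these are interchangeable, and you also quietly correct the typo $\norm{b}_{L^{2r}}$ in the corollary's statement to the intended $\norm{b}_{L^{2q}}$.
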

\begin{proof}
Plugging the a priori bound from the above Lemma \ref{identificationII} back into Corollary \ref{cor:HoelderReg} we obtain
\begin{align*} 
      \norm{u^\epsilon}^2_{C^{0, 1/2}_tL^2_x}&\leq 4 \norm{u_0}_{L^2_x}^2 +  \frac{3}{p} \norm{\nabla u_0}_{L^p_x}^p + (3 + 8T) \int_0^T \int_\Omega \abs{b_\varepsilon(u_t^{\varepsilon}(x) - w_t)}^2 \dd x \dd t\\ 
      &\lesssim \norm{u_0}_{L^2_x}^2+\norm{\nabla u_0}_{L^p_x}^p+C_{T, \Omega}\norm{b}_{L^{2r}}^2\norm{L}_{C^{0, \gamma}_tW^{1, r'}}(1+\norm{u^\epsilon}_{C^{0, 1/2}_tL^2_x}).
\end{align*}
An application of Lemma~\ref{app:Algebraic} with $a =  \norm{u^\epsilon}_{C^{0, 1/2}_tL^2_x}$, $K= \norm{u_0}_{L^2_x}^2+\norm{\nabla u_0}_{L^p_x}^p+C_{T, \Omega}\norm{b}_{L^{2q}_x}^2\norm{L}_{C^{0, \gamma}_tW^{1, r'}_x}$ and $C =C_{T, \Omega}\norm{b}_{L^{2q}_x}^2\norm{L}_{C^{0, \gamma}_tW^{1, r'}_x}$ results in
\begin{equation}
    \norm{u^\epsilon}^2_{C^{0, 1/2}_tL^2_x} \lesssim \norm{u_0}_{L^2_x}^2+\norm{\nabla u_0}_{L^p_x}^p + C_{T, \Omega}^2\norm{b}_{L^{2q}}^4\norm{L}_{C^{0, \gamma}_tW^{1, r'}}^2
\label{a priori I}
\end{equation}
uniformly in $\epsilon>0$. 

Note that similarly due to \eqref{eq:Strong solutionsA} in Theorem \ref{thm:Strong solutions} this also implies
\begin{align*}
& \norm{\nabla u^\varepsilon}_{L^\infty_t L^p_x}^p  \lesssim \norm{u_0}_{L^2_x}^2+\norm{\nabla u_0}_{L^p_x}^p + \norm{b}_{L^{2q}}^4\norm{L}_{C^{0, \gamma}_tW^{1, r'}}^2
\end{align*}
uniformly in $\epsilon>0$. 
\end{proof}

\begin{rem}[Refined a priori bounds]
We want to stress that even stronger a piori bounds than~\eqref{a priori II} are available. Indeed, the a priori bounds derived in Theorem~\ref{thm:Strong solutions} carry over to the robustified formulation, i.e., uniformly in $\varepsilon > 0$ it holds
\begin{align}
\begin{aligned}
&\norm{u^\varepsilon}_{L^\infty_t L^2_x}^2 + \norm{\nabla u^\varepsilon}_{L^\infty_t L^p_x}^p + \norm{\partial_t u^\varepsilon}_{L^2_t L^2_x}^2 + \norm{\Div S(\nabla u^\varepsilon)}_{L^2_t L^2_x}^2 \\
&\hspace{3em} \lesssim \norm{u_0}_{L^2_x}^2+\norm{\nabla u_0}_{L^p_x}^p + \norm{b}_{L^{2q}}^4\norm{L}_{C^{0, \gamma}_tW^{1, r'}}^2.
\end{aligned}
\end{align}
Therefore, we construct robustified solutions in the sense of Definition~\ref{def:robustified}. In particular, the right hand side of~\eqref{eq:robustified} is weakly differentiable in $L^2_x$. However, it is non trivial to identify the derivative. For smooth potentials an identification is possible as presented in Lemma~\ref{lem:Rob2Strong} \eqref{item:1}.
\end{rem}

\section{Passage to the limit}
\label{passage to limit}
In the following, we show that exploiting the a priori bounds obtained in the previous section allows a passage to the limit on the level of the robustified formulation. Towards this end, we will exploit monotonicity of the $p$-Laplace operator as well as suitable function space embeddings.\\
\\
Since~$\Omega$ is a bounded Lipschitz domain we can use the Rellich-Kondrachov theorem to conclude $W^{1,p}_{0,x} \hookrightarrow L^1_x$ compactly. Therefore, applying the Aubin-Lions Lemma~\ref{lem:Aubin-Lions} in the form of \eqref{eq:Aubin-Lions-cont} with $X=W^{1,p}_{0,x}$ and $B = Y = L^1_x$,  yields
\begin{align}
    L^\infty_t W^{1,p}_x \cap C^{0, 1/2}_t L^1_x \hookrightarrow\hookrightarrow C_tL^1_x.
\end{align}
Hence, the a priori bounds \eqref{a priori I} and \eqref{a priori II} also yield a subsequence still denoted $(u^\epsilon)_\epsilon$ converging strongly in $C_tL^1_x$. We denote the corresponding limit by $u$. In the following, we show that the strong convergence $u^\epsilon\to u$ in $C_tL^1_x$ allows to identify $u$ as a solution to \eqref{regularized problem} in the robustified sense of Definition \ref{def:robustified} meaning $u$ satisfies 
\[
u_t-u_s-\int_s^t \mbox{div} S(\nabla u_r) \dd r=(\mathcal{I}A^u)_{s,t}
\]
where $A^u_{s,t}=(b*L_{s,t})(u_s)$. In particular, we need to establish convergence of the corresponding sewing terms on the right hand side. This point will be addressed in the next Lemma. 
\begin{lemma}
 Let $r\in [1,\infty)$ and $\gamma > 1/2$. Suppose $w:[0, T]\to \R^N$ is continuous and admits a local time $L$ such that $L\in C^{0, \gamma}_tW^{1, r'}_x$ and  $b\in L^{2q}$ for $q\in [r, \infty)$. Let $u\in L^\infty_t W^{1,p}_x \cap C^{0, 1/2}_t L^1_x$ be as above, i.e. in particular such that $\norm{u^\epsilon-u}_{C_tL^1_x}\to 0$. Then, for
\[
A^u_{s,t}=(b*L_{s,t})(u_s), \qquad A^{u^\epsilon}_{s,t}=(b_\epsilon*L_{s,t})(u^\epsilon_s)
\]
we have $\norm{\mathcal{I}A^u-\mathcal{I}A^{u^\epsilon}}_{C^{0, \gamma}_tL^1_x}\to 0$. 
\end{lemma}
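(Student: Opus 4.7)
The plan is to exploit the linearity of the sewing map, which reduces the claim to showing that $\mathcal{I}(A^u - A^{u^\epsilon}) \to 0$ in $C^{0,\gamma}_tL^1_x$, and to establish quantitative bounds on both the germ $A^u - A^{u^\epsilon}$ and its coboundary $\delta(A^u - A^{u^\epsilon})$ that vanish as $\epsilon \to 0$. With such bounds in hand, the quantitative form of the Sewing Lemma~\ref{sewing} concludes directly.

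For the germ bound, I would decompose $(A^u - A^{u^\epsilon})_{s,t}$ by adding and subtracting $(b*L_{s,t})(u^\epsilon_s)$ into a piece controlled by the Lipschitz norm of $b*L_{s,t}$ times $\norm{u - u^\epsilon}_{C_tL^1_x}$, and a piece controlled via $\norm{b-b_\epsilon}_{L^{2q}}$. Young's convolution inequality~\eqref{young}, combined with the uniformly compact support of $L_{s,t}$ that justifies the embedding $L^{r'} \hookrightarrow L^{(2q)'}$ on this support, then yields a bound of the form $c_1(\epsilon)|t-s|^\gamma$ with $c_1(\epsilon)\to 0$.

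The more delicate task is controlling the coboundary. Using the additivity $L_{s,t} = L_{s,v}+L_{v,t}$ and abbreviating $g := b*L_{v,t}$ and $h_\epsilon := (b-b_\epsilon)*L_{v,t}$, one has
\begin{align*}
    \delta(A^u - A^{u^\epsilon})_{s,v,t} = \big\{[g(u_s) - g(u_v)] - [g(u^\epsilon_s) - g(u^\epsilon_v)]\big\} + \big\{h_\epsilon(u^\epsilon_s) - h_\epsilon(u^\epsilon_v)\big\}.
\end{align*}
The second bracket is bounded by $C\norm{b-b_\epsilon}_{L^{2q}}|t-s|^{1/2+\gamma}$ via the Lipschitzianity of $h_\epsilon$ and the uniform $C^{0,1/2}_tL^1_x$-bound on $u^\epsilon$ from Corollary~\ref{cor:UnifiedBounds}. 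For the first bracket, the Lipschitz property of $g$ provides two complementary $L^1_x$-estimates: grouping pairwise as $[g(u_s)-g(u_v)]$ and $[g(u^\epsilon_s)-g(u^\epsilon_v)]$ yields a uniform bound $A|t-s|^{1/2+\gamma}$ (using that $u\in C^{0,1/2}_tL^1_x$ by lower semicontinuity of the H\"older norm under $C_tL^1_x$-convergence), whereas regrouping as $[g(u_s)-g(u^\epsilon_s)]$ and $[g(u_v)-g(u^\epsilon_v)]$ yields a bound $B(\epsilon)|t-s|^\gamma$ with $B(\epsilon)\lesssim \norm{u-u^\epsilon}_{C_tL^1_x}\to 0$.

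Taking the geometric mean of these two complementary bounds with weight $\theta \in (0, 2\gamma-1)$ — a non-empty range precisely due to the assumption $\gamma > 1/2$ — yields an estimate of the form $c_2(\epsilon)|t-s|^{1+\eta}$ with $\eta>0$ and $c_2(\epsilon)\to 0$. Combining this with the germ bound via Lemma~\ref{sewing} gives
\begin{align*}
    \norm{\mathcal{I}(A^u - A^{u^\epsilon})_{s,t}}_{L^1_x} \lesssim c_1(\epsilon)|t-s|^\gamma + c_2(\epsilon)|t-s|^{1+\eta},
\end{align*}
whence the $C^{0,\gamma}_tL^1_x$-convergence follows after dividing by $|t-s|^\gamma$ and bounding $|t-s|^{1+\eta-\gamma}\leq T^{1+\eta-\gamma}$. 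The main obstacle I foresee is precisely this interpolation step for the first bracket: neither of the two natural Lipschitz-based estimates alone provides both the sewable order $|t-s|^{1+\eta}$ and the vanishing constant in $\epsilon$, and their reconciliation is what forces the regularity threshold $\gamma > 1/2$.
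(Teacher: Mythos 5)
Your proof is correct, but it follows a different route than the paper's. The paper uses the ``commutation of limits and sewings'' result cited as Lemma~\ref{sewing convergence} (Lemma A.2 of~\cite{Galeati2021}), whose hypotheses only require (i) convergence of the germs, $\norm{A^u - A^{u^\epsilon}}_\gamma \to 0$, and (ii) a bound $\sup_\epsilon \norm{\delta A^{u^\epsilon}}_{\gamma+1/2} \leq R$ \emph{uniform in $\epsilon$}, without any need to make the coboundary difference $\delta(A^u - A^{u^\epsilon})$ small. Your argument instead bounds $\delta(A^u - A^{u^\epsilon})$ directly and feeds the result into the basic Sewing Lemma~\ref{sewing}. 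This forces you to reconcile a vanishing-constant estimate at the non-sewable exponent $|t-s|^\gamma$ with a uniform estimate at the sewable exponent $|t-s|^{\gamma+1/2}$, which you do via a geometric-mean interpolation with weight $\theta\in(0,2\gamma-1)$ — a valid and correctly executed step, made possible by the assumption $\gamma>1/2$. In effect you have re-derived the content of Lemma~\ref{sewing convergence} in this specific instance: the interpolation argument you use is essentially the proof mechanism behind that lemma. What the paper's route buys is brevity — it outsources the interpolation to the cited lemma and only needs the two estimates~\eqref{eq:EstimateDiff} and~\eqref{eq:DeltaEstimate}, the second of which is structurally simpler than your three-term decomposition of $\delta(A^u - A^{u^\epsilon})$. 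What your route buys is self-containment (no external lemma beyond the basic Sewing Lemma) and a transparent explanation of the role of $\gamma>1/2$. One minor remark: your appeal to lower semicontinuity to conclude $u\in C^{0,1/2}_tL^1_x$ is redundant, since this membership is already part of the lemma's hypotheses.
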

\begin{proof}
Note that 
\begin{align}
\begin{aligned} \label{eq:EstimateDiff}
   &\norm{A^u_{s,t}-A^{u^\epsilon}_{s,t}}_{L^1_x}\\
   &\hspace{2em} \leq\norm{(b-b_\epsilon)*L_{s,t}(u^\epsilon_s)}_{L^1_x}+\norm{(b*L_{s,t})(u^\epsilon_s)-(b*L_{s,t})(u_s)}_{L^1_x}\\
   &\hspace{2em} \lesssim\norm{b-b_\epsilon}_{L^{2q}}\norm{L_{s,t}}_{L^{(2q)'}}+\norm{b}_{L^{2r}}\norm{L_{s,t}}_{W^{1, (2r)'}}\norm{u^\epsilon-u}_{C_tL^1_x}\\
    &\hspace{2em} \lesssim \norm{b-b_\epsilon}_{L^{2q}}\norm{L_{s,t}}_{L^{r'}}+\norm{b}_{L^{2q}}\norm{L_{s,t}}_{W^{1, r'}}\norm{u^\epsilon-u}_{C_tL^1_x}\\
    &\hspace{2em} \lesssim |t-s|^{\gamma}\left(\norm{b-b_\epsilon}_{L^{2q}}\norm{L}_{C^{\gamma}_tL^{r'}}+\norm{b}_{L^{2q}}^2\norm{L}_{C^{0, 1/2}_tW^{1, r'}}\norm{u^\epsilon-u}_{C_tL^1_x}\right)
    \end{aligned}
\end{align}
where we exploited again that due to the compact support of $L$, we have $\norm{L}_{L^{(2r)'}}\lesssim\norm{L}_{L^{r'}}$ and similarly for the corresponding Sobolev scales.  Moreover, we have accordingly
\begin{align}
\begin{aligned} \label{eq:DeltaEstimate}
   \norm{ (\delta A^{u^\epsilon})_{s,r,t} }_{L^1_x}&=\norm{(b_\epsilon*L_{r,t})(u^\epsilon_r)-(b_\epsilon*L_{r,t})(u^\epsilon_s)}_{L^1_x}\\
   &\lesssim \norm{b_\epsilon}_{L^{2q}}\norm{L_{r,t}}_{W^{1, r'}}\norm{u^\epsilon_s-u^\epsilon_r}_{L^1_x}\\
   &\lesssim \norm{b}_{L^{2q}}\norm{L}_{C^{\gamma}_tW^{1, r'}}\norm{u^\epsilon}_{C^{0, 1/2}_tL^1_x}|t-s|^{\gamma+1/2}.
   \end{aligned}
\end{align}
The claim now follows from Lemma \ref{sewing convergence}.
\end{proof}
Identifying the limit in the other nonlinear term in the robustified formulation of definition \ref{def:robustified} is now a classical consequence from Minty's Lemma \ref{minty}. Overall, we have thus established the existence part of Theorem \ref{main theorem}.

\subsection{Uniqueness}
In this subsection we verify uniqueness as claimed in Theorem~\ref{main theorem}. In fact, we will prove a stronger result.
\begin{lemma}[Continuous dependence on initial data]
Let $r\in [1,\infty)$ and $q\in  [r, \infty)$. Moreover, let $b \in L^{2q}$ satisfy~\eqref{eq:MonotoneB} and $w:[0, T]\to \R^N$ be continuous and admit a local time $L$ which satisfies $L\in C^{0,\gamma}_t W^{1, r'}$ for some $\gamma\in (1/2, 1)$. Let~$u,v$ be two robustified solutions to~\eqref{regularized problem} started in $u_0, v_0 \in L^2_x$, respectively. 

Then for all $t \in [0,T]$ it holds
\begin{align} \label{eq:InitDep}
  \norm{u_t - v_t}_{L^2_x} \leq \norm{u_0 - v_0}_{L^2_x}.  
\end{align}
\end{lemma}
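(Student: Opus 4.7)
The plan is to adapt the classical monotonicity uniqueness argument to the robustified formulation: subtract the equations for $u$ and $v$, test against $u-v$ in $L^2_x$, and exploit simultaneously the monotonicity of the $p$-Laplace operator and the assumption \eqref{eq:MonotoneB} on $b$. The subtlety compared with the classical case is that the nonlinear term only appears as a sewing on the right of \eqref{eq:robustified}, so the monotonicity of $b$ must be activated at the level of the germ rather than pointwise in time.

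First I would use that the regularity $\partial_t u, \partial_t v, \Div S(\nabla u), \Div S(\nabla v) \in L^2_t L^2_x$ of robustified solutions ensures that $M := (\mathcal{I}A^u)_{0,\cdot} - (\mathcal{I}A^v)_{0,\cdot}$ belongs to $W^{1,2}(0,T; L^2_x)$ and that $t \mapsto \norm{u_t - v_t}_{L^2_x}^2$ is absolutely continuous. Integrating its derivative and applying integration by parts on the $p$-Laplace contribution produces the energy identity
\begin{align*}
\tfrac{1}{2}\norm{u_t - v_t}_{L^2_x}^2 - \tfrac{1}{2}\norm{u_0 - v_0}_{L^2_x}^2 = &-\int_0^t \int_\Omega (S(\nabla u_r) - S(\nabla v_r)):\nabla(u_r - v_r) \dd x \dd r \\
&+ \int_0^t (u_r - v_r, \partial_r M_r)_{L^2_x} \dd r.
\end{align*}
Monotonicity of $S$ makes the first term non-positive, so everything reduces to showing that the second term is non-positive.

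For that, I would introduce the germs $B_{s,t} := (u_s - v_s, M_{s,t})_{L^2_x}$ and $\tilde B_{s,t} := (u_s - v_s, A^u_{s,t} - A^v_{s,t})_{L^2_x}$. The regularity $u-v \in C^{0,1/2}_t L^2_x$ combined with the $C^{0,\gamma}_t L^2_x$-control on $\mathcal{I}A^u$ and $\mathcal{I}A^v$ (obtainable by arguments analogous to those in Lemma \ref{identificationII}, with $A^u_{s,t}$ seen as an $L^2_x$-valued germ via $\norm{A^u_{s,t}}_{L^2_x} \leq |\Omega|^{1/2} \norm{b*L_{s,t}}_{L^\infty}$) ensures that both $B$ and $\tilde B$ admit a sewing. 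A Riemann-sum computation exploiting the Bochner representation $M_{s,t} = \int_s^t \partial_r M_r \dd r$ together with continuity of $u-v$ in $L^2_x$ yields $(\mathcal{I}B)_{0,t} = \int_0^t (u_r - v_r, \partial_r M_r) \dd r$, while the sewing error bound $\norm{M_{s,t} - (A^u_{s,t} - A^v_{s,t})}_{L^2_x} \lesssim |t-s|^{1/2 + \gamma}$ with $1/2 + \gamma > 1$ makes the partition sums of $B - \tilde B$ vanish in the limit, so $(\mathcal{I}B)_{0,t} = \lim_{|\mathcal{P}^n|\to 0} \sum \tilde B_{s',t'}$. Invoking the occupation times formula, justified for $b \in L^{2q}$ by mollification $b_\delta = b*\rho_\delta$ (which preserves monotonicity) together with uniform convergence $b_\delta * L_{s,t} \to b*L_{s,t}$ on $\mbox{supp}(L)$, Fubini and the pointwise monotonicity \eqref{eq:MonotoneB} give
\begin{equation*}
\tilde B_{s', t'} = \int_\Omega (u_{s'}(x) - v_{s'}(x)) \cdot \int_{s'}^{t'} [b(u_{s'}(x) - w_r) - b(v_{s'}(x) - w_r)] \dd r \dd x \leq 0,
\end{equation*}
so every Riemann sum is non-positive, and the limit inherits this sign. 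Combining the two non-positive contributions yields \eqref{eq:InitDep}.

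The main obstacle will be the bookkeeping around the identification $(\mathcal{I}B)_{0,t} = \int_0^t (u_r - v_r, \partial_r M_r) \dd r$: the germs $B$ and $\tilde B$ are only $|t-s|^{1/2 + \gamma}$-close to each other, which is not the $|t-s|^{1+\epsilon}$-closeness one would need to match their Bochner versus sewing interpretations through the uniqueness part of the sewing lemma. Instead I would argue by explicit telescoping of Riemann sums, leveraging $1/2 + \gamma > 1$ only in the summed sense over a partition.
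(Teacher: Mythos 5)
Your proof is correct, and it takes a genuinely different route from the one in the paper. Both arguments begin identically: subtract the two robustified equations, test against $u-v$, integrate in time, invoke monotonicity of $S$, and reduce the whole problem to showing that the contribution of the nonlinear sewing term is non-positive. The divergence lies in how that sign is extracted. The paper mollifies $b$ by $b^n = b*\rho^n$, integrates by parts in time, discards the error terms via Lemma~\ref{sewing convergence}, then \emph{undoes} the integration by parts to land on $\int_0^t (\partial_t\mathcal{I}A^{n,u}_s - \partial_t\mathcal{I}A^{n,v}_s, u_s - v_s)\dd s$, where the time derivative of the sewing is identified as $b^n(\cdot - w)$ through Lemma~\ref{lem:Rob2Strong}~\eqref{item:1}; monotonicity of $b^n$ then yields the sign. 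Your argument instead stays entirely at the germ level: you express $\int_0^t (u_r - v_r, \partial_r M_r)\dd r$ as the limit of Riemann sums of the germ $B_{s,t} = (u_s - v_s, M_{s,t})$, observe that these differ from Riemann sums of $\tilde B_{s,t} = (u_s - v_s, A^u_{s,t} - A^v_{s,t})$ by a quantity that vanishes as the mesh goes to zero (since $B - \tilde B = O(|t-s|^{1/2+\gamma})$ with $1/2 + \gamma > 1$), and show each $\tilde B_{s',t'} \leq 0$ directly via the occupation times formula (justified for $b\in L^{2q}$ by mollification, which preserves \eqref{eq:MonotoneB}, together with $(b_\delta - b)*L_{s,t} \to 0$ in $L^\infty$) and the pointwise inequality $(b(a)-b(c))\cdot(a-c)\leq 0$ with $a = u_{s'}(x)-w_r$, $c = v_{s'}(x)-w_r$. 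Your route is in my view more transparent about where the sign comes from — every Riemann summand is itself non-positive — and it avoids having to characterize the time derivative of the sewing; the paper's approach is more firmly anchored in the functional-analytic approximation machinery already assembled for the existence proof. One minor remark: the ``obstacle'' you flag at the end is not an obstacle. Since $\gamma > 1/2$, the exponent $1/2 + \gamma$ is already of the form $1+\epsilon$ with $\epsilon = \gamma - 1/2 > 0$, so the uniqueness part of the Sewing Lemma would also apply directly to conclude $\mathcal{I}B = \mathcal{I}\tilde B$; the explicit telescoping of Riemann sums you propose is a valid, and indeed somewhat more elementary, alternative.
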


\begin{proof}
Let $u, v$ be two robustified solutions to~\eqref{regularized problem} in the sense of Definition \ref{def:robustified} starting in $u_0, v_0\in L^2_x$, respectively. In particular, they belong to the regularity class
\begin{align*}
    u, v\in \left\{z \in C^{0, 1/2}_tL^2_x \cap L^\infty_tW^{1,p}_{0,x}  \big| \, \partial_t z, \Div S(\nabla z) \in L^2_t L^2_x \right\}
\end{align*}
and satisfy the system of equations, for all $t\in I$ and almost all $x \in \Omega$,
\begin{subequations}\label{eq:System}
\begin{align} \label{eq:System01}
    u_t-u_0-\int_0^t\mbox{div} S(\nabla u_s) \dd s=(\mathcal{I}A^u)_{0,t},\\ \label{eq:System02}
    v_t-v_0-\int_0^t\mbox{div} S(\nabla v_s) \dd s=(\mathcal{I}A^v)_{0,t},
\end{align}
\end{subequations}
where $A^u_{s,t}=(b*L_{s,t})(u_s)$ and $A^v_{s,t}=(b*L_{s,t})(v_s)$, respectively.

Subtract~\eqref{eq:System02} from ~\eqref{eq:System01}, differentiate in time, multiply with $u_t - v_t$ and integrate in space to find
\begin{align}
\begin{aligned} \label{eq:LocalPresum}
    \left(\partial_t u_t - \partial_t v_t, u_t -v_t \right) &-  \left(\Div S(\nabla u_t) - \Div S(\nabla v_t), u_t -v_t \right) \\
    &= \left( \big(\partial_t \mathcal{I}A^u\big)_t -\big(\partial_t \mathcal{I}A^v\big)_t , u_t - v_t  \right).
    \end{aligned}
\end{align}
Notice that
\begin{align} \label{eq:Derivative01}
    2\left(\partial_t u_t - \partial_t v_t, u_t -v_t \right) = \partial_t \norm{u_t - v_t}_{L^2}^2.
\end{align}
Moreover, due to the monotonicity of the $p$-Laplace operator,
\begin{align} \label{eq:MonotonepLap}
    -  \left(\Div S(\nabla u_t) - \Div S(\nabla v_t), u_t -v_t \right) \geq 0.
\end{align}
Next, we integrate~\eqref{eq:LocalPresum} in time and use~\eqref{eq:Derivative01} and~\eqref{eq:MonotonepLap}
\begin{align}
    \norm{u_t - v_t}_{L^2}^2 \leq \norm{u_0 - v_0}_{L^2}^2 + 2\int_0^t  \left( \big(\partial_t \mathcal{I}A^u\big)_s -\big(\partial_t \mathcal{I}A^v\big)_s , u_s - v_s  \right) \dd s.
\end{align}
The estimate~\eqref{eq:InitDep} immediately follows provided we can verify
\begin{align} \label{eq:SufficientForUnique}
    \int_0^t  \left( \big(\partial_t \mathcal{I}A^u\big)_s -\big(\partial_t \mathcal{I}A^v\big)_s , u_s - v_s  \right) \dd s \leq 0.
\end{align}
Since we can not identify the time derivative of the sewing for non-smooth potentials, we will approximate the potential~$b$ by a sequence of smooth potentials~$(b^n)$ that preserve the monotonicity assumption~\eqref{eq:MonotoneB}. We will use Lemma~\ref{sewing convergence} to justify the convergence of the approximations and Lemma~\ref{lem:Rob2Strong}~\eqref{item:1} to identify the time derivative of the sewings for smooth potentials. 

Let $b^n=(\rho^n*b)$, where $(\rho^n)_n$ is a sequence of non-negative mollifiers. Notice that the monotonicity assumption on $b$, cf.~\eqref{eq:MonotoneB}, is preserved for $b^n$. Indeed,
\begin{align*}
    (b^n(u)-b^n(v)) &\cdot (u-v)\\
&=\int_{\R^N}\rho^n(z) (b(u-z)-b(v-z))\cdot ((u-z)-(v-z)) \dd z \leq  0.
\end{align*}

Due to integration by parts
\begin{align*}
    &\int_0^t \left( \partial_t \mathcal{I}A^u_s -  \partial_t \mathcal{I}A^v_s, u_s- v_s \right) \dd s \\
    &\hspace{3em} =   \left(  \mathcal{I}A^u_t -  \mathcal{I}A^v_t, u_t- v_t \right)  - \int_0^t \left( \mathcal{I}A^u_s -  \mathcal{I}A^v_s, \partial_t u_s- \partial_t v_s \right) \dd s \\
    &\hspace{3em} = \left(  \mathcal{I}A^u_t -  \mathcal{I}A^{n,u}_t, u_t- v_t \right) - \int_0^t \left( \mathcal{I}A^u_s -  \mathcal{I}A^{n,u}_s, \partial_t u_s- \partial_t v_s \right) \dd s \\
    &\hspace{4em} - \left(  \mathcal{I}A^v_t -  \mathcal{I}A^{n,v}_t, u_t- v_t \right) - \int_0^t \left( \mathcal{I}A^v_s -  \mathcal{I}A^{n,v}_s, \partial_t u_s- \partial_t v_s \right) \dd s \\
    &\hspace{4em} + \left(  \mathcal{I}A^{n,u}_t -  \mathcal{I}A^{n,v}_t, u_t- v_t \right)  - \int_0^t \left( \mathcal{I}A^{n,u}_s -  \mathcal{I}A^{n,v}_s, \partial_t u_s- \partial_t v_s \right) \dd s \\
    &\hspace{3em} =: \mathrm{R}_u^n + \mathrm{R}_v^n +\mathrm{R}_\mathrm{smooth}^n. 
\end{align*}
Here $A^{n,z}_{s,t} = (b^n * L_{s,t})(z_s)$, $z \in \{u,v\}$. 

By H\"older's inequality
\begin{align*}
    \sup_{t\in I} \abs{\mathrm{R}_u^n + \mathrm{R}_v^n} &\leq \left( \norm{\mathcal{I}A^u -  \mathcal{I}A^{n,u}}_{L^\infty_t L^2_x} + \norm{\mathcal{I}A^v -  \mathcal{I}A^{n,v}}_{L^\infty_t L^2_x} \right)\\
    &\hspace{4em} \left( \norm{u - v}_{L^\infty_t L^2_x} + \norm{\partial_t u - \partial_t v}_{L^1_t L^2_x} \right).
\end{align*}

Let $z \in \{u,v\}$. Similarly to~\eqref{eq:EstimateDiff} and~\eqref{eq:DeltaEstimate} it holds
\begin{align*}
    \norm{A^{n,z}_{s,t} - A^{z}_{s,t}}_{L^2_x} &\lesssim |t-s|^{\gamma} \norm{b-b^n}_{L^{2q}}\norm{L}_{C^{\gamma}_tL^{r'}}, \\
    \norm{ (\delta A^{n,z})_{srt}}_{L^2_x} &\lesssim  \norm{b}_{L^{2q}}\norm{L}_{C^{\gamma}_tW^{1, r'}}\norm{z}_{C^{0, 1/2}_tL^2_x}|t-s|^{\gamma+1/2}.
\end{align*}
Therefore, we can apply Lemma~\ref{sewing convergence} and obtain
\begin{align*}
    \norm{\mathcal{I}A^z -  \mathcal{I}A^{n,z}}_{C_t^0 L^2_x} \to 0 \quad \text{ as } n \to \infty.
\end{align*}
Thus,
\begin{align} \label{eq:Lim}
    \lim_{n\to \infty} \sup_{t\in I} \abs{\mathrm{R}_u^n + \mathrm{R}_v^n} = 0.
\end{align}

Reverting the partial integration, using Lemma~\ref{lem:Rob2Strong}~\eqref{item:1} and the monotonicity of~$b^n$
\begin{align}
\begin{aligned} \label{eq:MonotoneR}
    \mathrm{R}_\mathrm{smooth}^n &= \int_0^t \left( \partial_t \mathcal{I}A^{n,u}_s -  \partial_t \mathcal{I}A^{n,v}_s, u_s- v_s \right) \dd s \\
    &= \int_0^t \left( b^n(u_s - w_s) -  b^n(v_s - w_s), u_s- v_s \right) \dd s \leq 0.
    \end{aligned}
\end{align}
Finally,~\eqref{eq:SufficientForUnique} follows from~\eqref{eq:Lim} and~\eqref{eq:MonotoneR} and the assertion is verified.
\end{proof}

\section{Appendix} \label{app:Appendix}

\subsection*{Verification of Example~\ref{ex:Main}} \label{appendix:Example}
For convenience we recall~\eqref{example b}
\begin{align*}
    b(u)=-|u|^{\eta-1}u \textbf{1}_{\{|u|\leq K\}},
\end{align*}
where $\eta \in (-N/2,0)$ and $K > 0$.

\subsubsection*{Non-existence of weak solution}
First we verify that~\eqref{Original problem} does not have a weak solution started in~$0$ if $\eta \leq -1$. We proceed by contraposition. Assume that~\eqref{Original problem} does have a weak solution started in~$0$. Our goal is to use the energy equality~\eqref{eq:EstimateWeak01} to derive a contradiction.

Due to the density of smooth functions in $W^{-1,p'}$, we immediately find by~\eqref{def:WeakSolutionEq} 
\begin{align*}
    \norm{\partial_t u}_{L^{p'}_t W^{-1,p'}_x} &\leq \norm{S(\nabla u)}_{L^{p'}_t L^{p'}_x} + \norm{b(u)}_{L^{p'}_t W^{-1,p'}_x} \\
    &= \norm{\nabla u}_{L^{p}_t L^{p}_x}^{p-1} + \norm{b(u)}_{L^{p'}_t W^{-1,p'}_x}.
\end{align*}
Therefore~\eqref{def:WeakSolutionEq} implies for almost all $s \in [0,T]$ 
\begin{align} \label{eq:WeakDerivativesApp}
    \partial_t u_s - \Div \,S(\nabla u_s) = b(u_s) \quad \text{ in }W^{-1,p'}.
\end{align}
Moreover for all $u \in L^{p}_t W^{1,p}_{0,x} \cap W^{-1,p'}_t L^{p'}_x$ it holds
\begin{align} \label{eq:WeakDerivativesApp2}
    2\int_0^T \langle \partial_t u_s, u_s \rangle_{W^{-1,p'} \times W^{1,p}_0} \dd s = \norm{u_T}_{L^2}^2 - \norm{u_0}_{L^2}^2.
\end{align}
Since~$u\in L^p W^{1,p}_{0,x}$ we can test~\eqref{eq:WeakDerivativesApp} with $u_s$. Using~\eqref{eq:WeakDerivativesApp2} and~$u_0 = 0$, we arrive at
\begin{align} \label{eq:EnergyEqualityApp}
    \norm{u_T}_{L^2}^2  + 2\int_0^T \norm{\nabla u_r}_{L^p}^p \dd r + 2\int_0^T \int_{\Omega} \abs{u_r}^{\eta + 1} \textbf{1}_{\{|u_r|\leq K\}} \dd x \dd r =  0.
\end{align}
Notice that the left hand side of~\eqref{eq:EnergyEqualityApp} contains only nonnegative terms. Therefore each individual term needs to vanish. However,
\begin{align}
    \int_0^T \norm{\nabla u_r}_{L^p}^p \dd r = 0 \quad \Rightarrow \quad u \equiv 0,
\end{align}
which leads to the contradiction, since for $\eta + 1 \leq 0$, 
\begin{align}
   0 =  \int_0^T \int_{\Omega} \abs{u_r}^{\eta + 1} \textbf{1}_{\{|u_r|\leq K\}} \dd x \dd r \geq T \abs{ \Omega }.
\end{align}

\subsubsection*{Existence of robustified solution}
Let $H$ satisfy~\eqref{ex:ConditionH} and~$w^H$ be a fractional Brownian motion with Hurst parameter~$H$. We will check that the local time~$L$ associated to the fraction Brownian motion~$w^H$ is sufficiently regular for us to can apply Theorem~\ref{main theorem}.

We recall that a fractional Brownian motion with Hurst parameter~$H$ is a continuous centered Gaussian process whose covariance is given by 
\[
\mathbb{E}[w^H_s \otimes w^H_t]=\frac{1}{2}(t^{2H}+s^{2H}-|t-s|^{2H})\mbox{Id}.
\]
where $\mbox{Id}$ denotes the identity in $\R^N$. The space-time regularity of local times associated to locally non-deterministic Gaussian processes has been adressed by Harang and Perkowksi in~\cite{harang2020cinfinity} (see also~\cite{Harang2021} for regularity results for local times associated with Volterra-L\'evy processes). In particular \cite[Theorem 3.4]{harang2020cinfinity} shows that if
\begin{align}\label{app:ConditionLocal}
H \in (0,1/N) \quad \text{ and } \quad \lambda<\frac{1}{2H}-\frac{N}{2},
\end{align}
then almost any realization of the fractional Brownian motion~$w^H$ admits a local time $L$ that satisfies $L\in C^{0,\gamma}\big([0,T];H^{\lambda}(\mathbb{R}^N)\big)$ for all $\gamma\in [0, 1 -(\lambda + N/2)H)$. 

Notice that due to~\eqref{app:ConditionLocal}
\begin{align*}
    1 - (\lambda + N/2)H \geq 1/2.
\end{align*}
Therefore, if~\eqref{app:ConditionLocal} is satisfied we can always choose~$\gamma > 1/2$ for the temporal regularity. However, whether the spatial regularity is sufficient is to be determined.

If $\eta \in (-N/4,0)$ it holds~$b \in L^4(\mathbb{R}^N)$. Therefore $r = 2 = r'$ and we have as a condition
\begin{align}
    1 \leq \lambda < \frac{1}{2H} - \frac{N}{2} \quad \Rightarrow \quad H < \frac{1}{N+2}.
\end{align}

If $\eta \in (-N/2,-N/4]$ we need to leave the Hilbert scale and work with Sobolev spaces. Let $\varepsilon \ll 1$ and define~$r_\varepsilon := - \tfrac{N-\varepsilon}{2\eta}$. We observe that $b\in L^{2r_\varepsilon}(\mathbb{R}^N)$ for all $\varepsilon >0$. Moreover, $r_\varepsilon ' = \tfrac{N - \varepsilon}{N + 2\eta - \varepsilon} \rightarrow \tfrac{N}{N + 2\eta}$ as $\varepsilon \to 0$. Next, we transfer the Hilbert scale~$H^\lambda$ to the Sobolev scale~$W^{1,q}$ by an application of Sobolev embeddings. Let~$1 \leq \lambda < 1+N/2$ and
\begin{align*}
    1 - \frac{N}{q}\leq \lambda - \frac{N}{2} \quad \Leftrightarrow \quad q \leq \frac{2N}{2 + N - 2 \lambda}.
\end{align*}
Then $H^\lambda_{\mathrm{loc}}(\mathbb{R}^N)\hookrightarrow W^{1, q }_{\mathrm{loc}}(\mathbb{R}^N)$. In particular, if
\begin{align*}
  \frac{2N}{2 + N - 2 \lambda} > \frac{N}{N + 2\eta},
\end{align*}
then there exists~$\varepsilon >0$ such that $H^\lambda_{\mathrm{loc}}(\mathbb{R}^N)\hookrightarrow W^{1, r_\varepsilon' }_{\mathrm{loc}}(\mathbb{R}^N)$.
Therefore, we find as conditions
\begin{subequations} \label{app:ConditionsCoupled}
\begin{align}
    1 &\leq \lambda < 1 + \frac{N}{2}, \\
    1 &\leq \lambda < \frac{1}{2H} - \frac{N}{2}, \\ \label{app:HCoupled}
   1 - \frac{N + 4 \eta}{2} &< \lambda.
\end{align}
\end{subequations}

Since $1 - \frac{N + 4 \eta}{2}  \rightarrow 1 + N/2$ as $\eta \to -N/2$ monotonically, system~\eqref{app:ConditionsCoupled} is solvable if
\begin{align}
   1 - \frac{N + 4\eta}{2} <\frac{1}{2H} - \frac{N}{2} \quad \Rightarrow \quad H <\frac{1}{2- 4\eta}.
\end{align}

In summary, we have verified that fractional Brownian motions whose Hurst parameter satisfy~\eqref{ex:ConditionH} are sufficiently regularizing. The existence of a robustified solution follows by Theorem~\ref{main theorem}.

\subsection*{Local time and occupation times formula}
We recall for the reader the basic concepts of occupation measures, local times and the occupation times formula. A comprehensive review paper on these topics is \cite{horowitz}. 
\begin{definition}
Let $w:[0,T]\to \R^N$ be a measurable path. Then the occupation measure at time $t\in [0,T]$, written $\mu^w_t$ is the Borel measure on $\R^d$ defined by 
\[
\mu^w_t(A):=\lambda(\{ s\in [0,t]:\ w_s\in A\}), \quad A\in \mathcal{B}(\R^N),
\]
where $\lambda$ denotes the standard Lebesgue measure. 
\end{definition}
The occupation measure thus measures how much time the process $w$ spends in certain Borel sets. Provided for any $t\in [0,T]$, the measure is absolutely continuous with respect to the Lebesgue measure on $\R^N$, we call the corresponding Radon-Nikodym derivative local time of the process $w$:
\begin{definition}
Let $w:[0,T]\to \R^N$ be a measurable path. Assume that there exists a measurable function $L^w:[0,T]\times \R^N\to \R_+$ such that 
\[
\mu^w_t(A)=\int_A L^w_t(z)dz, 
\]
for any $A\in \mathcal{B}(\R^N)$ and  $t\in [0,T]$. Then we call $L^w$ local time of $w$. 
\end{definition}
Note that by the definition of the occupation  measure, we have for any bounded measurable function $f:\R^N\to \R$ that 
\begin{equation}
    \int_0^tf(w_s)ds=\int_{\R^N} f(z)\mu^w_t(dz).
    \label{occupation times formula}
\end{equation}
The above equation \eqref{occupation times formula} is called occupation times formula. Remark that in particular, provided $w$ admits a local time, we also have for any $u\in \R^N$
\begin{equation}
    \int_0^tf(u-w_s)ds=\int_{\R^N} f(u-z)\mu^w_t(dz)=\int_{\R^N}f(u-z)L^w_t(z)dz=(f*L^w_t)(u).
\end{equation}
\subsection*{Sewing Lemma}
Let us recall the Sewing Lemma due to \cite{gubi} (see also \cite[Lemma 4.2]{frizhairer}). Let $E$ be a Banach space, $[0,T]$ a given interval. Let $\Delta_n$ denote the $n$-th simplex of $[0,T]$, i.e. $\Delta_n:\{(t_1, \dots, t_n)| 0\leq t_1\dots\leq t_n\leq T \} $. For a function $A:\Delta_2\to E$ define the mapping $\delta A: \Delta_3\to E$ via
\[
(\delta A)_{s,u,t}:=A_{s,t}-A_{s,u}-A_{u,t}
\]
Provided $A_{t,t}=0$ we say that for $\alpha, \beta>0$ we have $A\in C^{\alpha, \beta}_2(E)$ if $\norm{A}_{\alpha, \beta}<\infty$ where
\begin{align*}
\norm{A}_\alpha &:=\sup_{(s,t)\in \Delta_2}\frac{\norm{A_{s,t}}_E}{|t-s|^\alpha}, \hspace{3em}
\norm{\delta A}_{\beta}:=\sup_{(s,u,t)\in \Delta_3}\frac{\norm{(\delta A)_{s,u,t}}_E}{|t-s|^\beta}, \\
\norm{A}_{\alpha, \beta}&:=\norm{A}_\alpha+\norm{\delta A}_\beta.
\end{align*}
In this case, we call $A$ a germ. For a function $f:[0,T]\to E$, we note $f_{s,t}:=f_t-f_s$. Moreover, if for any sequence $(\mathcal{P}^n([s,t]))_n$ of partitions of $[s,t]$ whose mesh size goes to zero, the quantity 
\begin{equation}
    (I^nA)_{s,t}=\sum_{[u,v]\in \mathcal{P}^n([s,t])}A_{u,v}
    \label{riemann sums}
\end{equation}
converges to the same limit, we note
\[
(\mathcal{I} A)_{s,t}:=\lim_{n\to \infty}\sum_{[u,v]\in \mathcal{P}^n([s,t])}A_{u,v}.
\]

\begin{lemma}[Sewing]
\label{sewing}
Let $0<\alpha\leq 1<\beta$. Then for any $A\in C^{\alpha, \beta}_2(E)$, $(\mathcal{I} A)$ is well defined and we say that the germ $A$ admits a sewing. Moreover, denoting $(\mathcal{I} A)_t:=(\mathcal{I} A)_{0,t}$, we have $(\mathcal{I} A)\in C^\alpha([0,T], E)$ and $(\mathcal{I} A)_0=0$ and for some constant $c>0$ depending only on $\beta$ we have
\[
\norm{(\mathcal{I} A)_{t}-(\mathcal{I} A)_{s}-A_{s,t}}_{E}\leq c\norm{\delta A}_\beta |t-s|^\beta.
\]
\end{lemma}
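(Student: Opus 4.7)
The plan is to construct $(\mathcal{I}A)_{s,t}$ as the limit of Riemann sums along \emph{dyadic} partitions and then upgrade to arbitrary partition sequences via additivity and the resulting a priori bound.

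For $(s,t)\in \Delta_2$ and $n\geq 0$, let $\mathcal{D}^n_{s,t}$ be the dyadic partition of $[s,t]$ into $2^n$ equal subintervals, and set $J_n:=\sum_{[u,v]\in \mathcal{D}^n_{s,t}} A_{u,v}$. Since $\mathcal{D}^{n+1}_{s,t}$ is obtained from $\mathcal{D}^n_{s,t}$ by inserting the midpoint $m$ of each subinterval $[u,v]$, and $A_{u,m}+A_{m,v}-A_{u,v}=-(\delta A)_{u,m,v}$, we estimate
\begin{align*}
\norm{J_{n+1}-J_n}_E \leq \sum_{[u,v]\in \mathcal{D}^n_{s,t}}\norm{\delta A}_\beta |v-u|^\beta = \norm{\delta A}_\beta |t-s|^\beta\, 2^{n(1-\beta)}.
\end{align*}
As $\beta>1$, the right-hand side is summable in $n$, so $(J_n)_n$ is Cauchy in $E$; denote its limit $(\mathcal{I}A)_{s,t}$. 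Telescoping yields the a priori bound
\begin{align*}
\norm{(\mathcal{I}A)_{s,t}-A_{s,t}}_E \leq \sum_{n\geq 0}\norm{J_{n+1}-J_n}_E \leq c_\beta \norm{\delta A}_\beta |t-s|^\beta,
\end{align*}
with $c_\beta=(1-2^{1-\beta})^{-1}$, which is the quantitative estimate claimed in the lemma.

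Next I would verify additivity: for $s\leq u\leq t$, one has $(\mathcal{I}A)_{s,u}+(\mathcal{I}A)_{u,t}=(\mathcal{I}A)_{s,t}$. One obtains this by approximating $u$ by dyadic points of $[s,t]$ and using the a priori bound to control the error on the small interval between $u$ and its dyadic approximants. Given additivity together with the a priori bound, for any partition $\mathcal{P}=\{s=r_0<\dots<r_N=t\}$ of mesh $|\mathcal{P}|$ we have
\begin{align*}
\norm{\sum_{i}A_{r_i,r_{i+1}}-(\mathcal{I}A)_{s,t}}_E = \norm{\sum_{i}\bigl(A_{r_i,r_{i+1}}-(\mathcal{I}A)_{r_i,r_{i+1}}\bigr)}_E \leq c_\beta\norm{\delta A}_\beta |\mathcal{P}|^{\beta-1}|t-s|,
\end{align*}
which vanishes as $|\mathcal{P}|\to 0$ since $\beta>1$. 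Hence the Riemann sums in~\eqref{riemann sums} converge to the same limit along every mesh-zero sequence. Hölder regularity of $t\mapsto (\mathcal{I}A)_t:=(\mathcal{I}A)_{0,t}$ with exponent $\alpha$ then follows from
\begin{align*}
\norm{(\mathcal{I}A)_t-(\mathcal{I}A)_s}_E \leq \norm{A_{s,t}}_E + c_\beta \norm{\delta A}_\beta |t-s|^\beta \leq \bigl(\norm{A}_\alpha + c_\beta \norm{\delta A}_\beta T^{\beta-\alpha}\bigr)|t-s|^\alpha,
\end{align*}
using $\beta>1\geq \alpha$, while $(\mathcal{I}A)_0=0$ is immediate from the construction.

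The main technical point is establishing additivity, since an arbitrary intermediate point $u$ will generically not lie on any dyadic grid. This is handled by a standard approximation argument exploiting the $|t-s|^\beta$-smallness of the a priori error, but it is the step that requires the most care; once additivity is in hand, the remainder of the proof reduces to the clean telescoping computation above.
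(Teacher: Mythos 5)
Your proposal follows the same dyadic strategy as the paper, and goes considerably further than the paper's own text, which explicitly only \emph{sketches} the Cauchy estimate for dyadic Riemann sums and defers to \cite{gubi}, \cite{frizhairer} for the rest. Your telescoping bound $\norm{(\mathcal{I}A)_{s,t}-A_{s,t}}_E\leq c_\beta\norm{\delta A}_\beta|t-s|^\beta$ with $c_\beta=(1-2^{1-\beta})^{-1}$, the passage from additivity and this bound to convergence along arbitrary mesh-zero partitions, and the final H\"older estimate are all clean and correct.

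The genuine gap is precisely the additivity step you single out, and the one-line sketch you give for it is circular. Approximating $u$ by a level-$n$ dyadic point $u_n=s+k(t-s)/2^n$ of $[s,t]$ does not reduce the problem: the restriction of the level-$n'$ dyadic grid of $[s,t]$ to $[s,u_n]$ is the uniform partition of $[s,u_n]$ into $k\,2^{n'-n}$ cells, which for general $k$ is \emph{not} the dyadic partition of $[s,u_n]$ used to define $(\mathcal{I}A)_{s,u_n}$. Showing that these two partition sequences yield the same limit is a mesh-independence statement of exactly the kind one is trying to establish, so the a priori bound for dyadic sums alone cannot close the argument. The standard fix, and the one in \cite{frizhairer}, is to prove $\norm{\sum_{[a,b]\in\mathcal{P}}A_{a,b}-A_{s,t}}_E\leq c_\beta\norm{\delta A}_\beta|t-s|^\beta$ for \emph{every} partition $\mathcal{P}$ of $[s,t]$ by successive point removal: when $k$ interior points remain, remove the one whose two neighbours are closest, so that the incurred error is at most $\norm{\delta A}_\beta\big(2|t-s|/(k+1)\big)^\beta$, and sum the resulting $\zeta(\beta)$ series. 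With this uniform bound, comparing any two partitions through their common refinement gives convergence of the net of Riemann sums at once, and additivity together with your H\"older computation then falls out for free; the dyadic construction becomes unnecessary. You correctly identified where the difficulty sits, but the proposed route through dyadic approximants of $u$ does not resolve it.
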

\begin{proof}
For readers unfamiliar with the Sewing Lemma, let us sketch the main argument why the Riemann-type sums \eqref{riemann sums} converge in the case of dyadic partitions. Let us thus consider $\mathcal{P}^n([0,t])=\{t_k=\frac{k}{2^n}t, \quad  k=1, \dots 2^n-1\}$ and show that the sequence $(I^nA)_t$ is Cauchy in $E$. Indeed, denoting by $u_k$ the midpoint of $t_k$ and $t_{k+1}$, remark that 
\[
(I^nA)_t-(I^{n+1}A)_t=\sum_k^{2^n} A_{t_k, t_{k+1}}-\sum_k^{2^n}A_{t_k, u_k}-\sum_k^{2^n}A_{u_k, t_{k+1}}=\sum_k^{2^n} (\delta A)_{t_k, u_k, t_{k+1}}
\]
   Using the triangle inequality and the assumption $A\in C_2^{\alpha, \beta}$, we have 
   \[
   \norm{(I^nA)_t-(I^{n+1}A)_t}_E\leq \norm{\delta A}_\beta 2^{n(1-\beta)}
   \]
   meaning that for $\beta>1$ we can obtain convergence of $(I^nA)_t$ in $E$.
\end{proof}

Let us finally cite a result allowing to commute limits and sewings. 
\begin{lemma}[Lemma A.2 \cite{Galeati2021}] 
\label{sewing convergence}
For $0<\alpha\leq 1<\beta$ and $E$ a Banach space, let  $A\in C^{\alpha, \beta}_2(E)$ and $ (A^n)_n\subset C^{\alpha, \beta}_2(E)$ such that for some $R>0$ $\sup_{n\in \mathbb{N}}\norm{\delta A^n}_\beta\leq R$ and such that $\norm{A^n-A}_\alpha\to 0$. Then \[\norm{\mathcal{I}(A-A^n)}_\alpha\to 0.\]
\end{lemma}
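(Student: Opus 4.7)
Set $B^n := A - A^n$. Since the sewing operator $\mathcal{I}$ is linear on its natural domain, $\mathcal{I}(A-A^n) = \mathcal{I}B^n$, so the goal is to show $\norm{\mathcal{I}B^n}_\alpha \to 0$. First I would collect the elementary inputs: by hypothesis $\norm{B^n}_\alpha \to 0$, and the triangle inequality gives
\[
M := \sup_{n\in\mathbb{N}} \norm{\delta B^n}_\beta \leq \norm{\delta A}_\beta + R < \infty,
\]
since $\delta$ is linear. Both $A^n$ and $A$ lie in $C^{\alpha,\beta}_2(E)$, so the Sewing Lemma~\ref{sewing} applies to each, and in particular to $B^n$.

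The tempting first move is to invoke Lemma~\ref{sewing} at exponent $\beta$ directly: one gets $\norm{(\mathcal{I}B^n)_{s,t}}_E \leq \norm{B^n}_\alpha|t-s|^\alpha + cM|t-s|^\beta$, which after dividing by $|t-s|^\alpha$ and taking supremum over $[0,T]$ yields only uniform boundedness of $\norm{\mathcal{I}B^n}_\alpha$, not convergence to zero, because the bound $M$ does not vanish with $n$. The decisive idea is to exploit interpolation on $\delta B^n$: for every $(s,u,t) \in \Delta_3$ one simultaneously has the two estimates
\[
\norm{(\delta B^n)_{s,u,t}}_E \leq 3\norm{B^n}_\alpha |t-s|^\alpha \quad \text{and} \quad \norm{(\delta B^n)_{s,u,t}}_E \leq M|t-s|^\beta,
\]
the first by expanding $\delta B^n$ into three increments of $B^n$. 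Multiplicatively interpolating with weights $1-\theta$ and $\theta$ gives, for any $\theta \in (0,1)$,
\[
\norm{(\delta B^n)_{s,u,t}}_E \leq (3\norm{B^n}_\alpha)^{1-\theta} M^\theta |t-s|^{\alpha(1-\theta)+\beta\theta}.
\]

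Now I would pick any $\beta' \in (1,\beta)$ and set $\theta = (\beta'-\alpha)/(\beta-\alpha)$, which lies in $(0,1)$ because $\alpha \leq 1 < \beta' < \beta$. This yields $\norm{\delta B^n}_{\beta'} \leq C_1 \norm{B^n}_\alpha^{1-\theta}$ for a constant $C_1$ depending only on $M$ and $\theta$. Since $\beta' > 1$, the Sewing Lemma at the improved regularity pair $(\alpha, \beta')$ is still applicable and delivers
\[
\norm{(\mathcal{I}B^n)_{s,t} - B^n_{s,t}}_E \leq c \norm{\delta B^n}_{\beta'} |t-s|^{\beta'} \leq cC_1\,\norm{B^n}_\alpha^{1-\theta}|t-s|^{\beta'}.
\]
Combining this with $\norm{B^n_{s,t}}_E \leq \norm{B^n}_\alpha |t-s|^\alpha$, dividing by $|t-s|^\alpha$, and using $|t-s| \leq T$,
\[
\norm{\mathcal{I}B^n}_\alpha \leq \norm{B^n}_\alpha + cC_1 T^{\beta'-\alpha}\norm{B^n}_\alpha^{1-\theta} \longrightarrow 0,
\]
as $n \to \infty$, since $1-\theta > 0$. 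Crucially, the sewing constant from Lemma~\ref{sewing} is independent of $n$, so passing to the limit is legitimate.

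The only delicate point is recognizing that the direct sewing bound at level $\beta$ is too blunt: it hides the smallness of $\norm{B^n}_\alpha$ inside an $n$-independent constant. Once one notices that the sewing lemma only requires the second exponent to exceed $1$, not to equal $\beta$, the leftover slack $\beta - \beta' > 0$ can be traded against the uniform bound $M$ by interpolation, extracting the factor $\norm{B^n}_\alpha^{1-\theta}$ that drives the convergence. No further subtleties arise; everything else is a direct appeal to Lemma~\ref{sewing} and the linearity of $\mathcal{I}$.
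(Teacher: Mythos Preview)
Your argument is correct. The paper does not actually prove this lemma; it is quoted verbatim as Lemma~A.2 of \cite{Galeati2021} and used as a black box, so there is no in-paper proof to compare against. Your interpolation approach---bounding $\norm{\delta B^n}_{\beta'}$ for an intermediate $\beta'\in(1,\beta)$ by combining the trivial $\alpha$-bound $\norm{(\delta B^n)_{s,u,t}}_E\leq 3\norm{B^n}_\alpha|t-s|^\alpha$ with the uniform $\beta$-bound, then applying the Sewing Lemma at level $(\alpha,\beta')$---is the standard and essentially the only route, and it is precisely the argument given in the cited reference. One minor point worth making explicit is that the sewing $\mathcal{I}B^n$ is unambiguous regardless of whether one views $B^n$ as an element of $C^{\alpha,\beta}_2(E)$ or $C^{\alpha,\beta'}_2(E)$, since it is defined as a limit of Riemann sums independent of the H\"older exponents; you use this implicitly when switching from $\beta$ to $\beta'$.
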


\subsection*{Some classical Lemmata from monotone operator theory}
Suppose $u^\epsilon$ is some approximation to \eqref{Original problem} (the solution to the Galerkin projected problem or the solution to the mollified problem \eqref{regularized problem II} for example) for which uniform bounds in $L^p(0, T; W^{1, p}_0(\Omega))$ hold, meaning we find $u$ such that $u^\epsilon\rightharpoonup u$ along a subsequence in $L^p(0, T; W^{1, p}_0(\Omega))$. Moreover, as $\mbox{div} S(\nabla u^\epsilon)$ will be bounded in $L^{p'}(0, T; W^{-1, p'}(\Omega))$, we find a $\xi$ such that $\mbox{div} S(\nabla u^\epsilon)\rightharpoonup \xi$ along a further subsequence in $L^{p'}(0, T; W^{-1, p'}(\Omega))$. For the identification step to work, we need an argument ensuring that $\xi=\mbox{div} S(\nabla u)$. This is precisely the content of Minty's Lemma. 
\begin{lemma}[Minty's Lemma/ Monotonicity trick {\cite[(25.4b)]{Zeidler1990}}]
    \label{minty}
    Let $X$ be a real reflexive Banach space and $A:X\to X^*$ a monotone and hemicontinuous operator. Then, provided we have 
    \begin{align*}
        u^n &\rightharpoonup u \quad in \ X\\
        Au^n &\rightharpoonup \xi \quad in \ X^*\\
        \langle Au^n, u^n\rangle_{X^*\times X}&\to \langle \xi, u\rangle_{X^*\times X} 
    \end{align*}
    we may conclude $Au=\xi$.
\end{lemma}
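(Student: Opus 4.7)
The plan is to implement the classical Minty trick, exploiting monotonicity to pass to the limit in a test against arbitrary elements and then using hemicontinuity to recover the identification at $u$ itself.

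First I would fix an arbitrary $v\in X$ and use monotonicity of $A$ to write, for every $n$,
\begin{equation*}
    \langle Au^n - Av, u^n - v\rangle_{X^*\times X} \geq 0.
\end{equation*}
Expanding the pairing yields
\begin{equation*}
    \langle Au^n, u^n\rangle - \langle Au^n, v\rangle - \langle Av, u^n\rangle + \langle Av, v\rangle \geq 0.
\end{equation*}
Now I would pass to the limit $n\to\infty$ term by term: the first term converges to $\langle \xi, u\rangle$ by the third hypothesis; the second converges to $\langle \xi, v\rangle$ since $Au^n\rightharpoonup \xi$ in $X^*$; the third converges to $\langle Av, u\rangle$ since $u^n\rightharpoonup u$ in $X$; and the last term is constant in $n$. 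Combining these, I obtain
\begin{equation*}
    \langle \xi - Av, u - v\rangle_{X^*\times X} \geq 0 \qquad \forall\, v\in X.
\end{equation*}

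The second step is the actual Minty trick. I would specialize $v = u - t w$ for arbitrary $w\in X$ and $t>0$, yielding
\begin{equation*}
    t\,\langle \xi - A(u - tw), w\rangle \geq 0,
\end{equation*}
then divide by $t>0$ and let $t\downarrow 0$. Here is where hemicontinuity of $A$ enters: the map $t\mapsto A(u - tw)$ is continuous from $[0,\infty)$ into $X^*$ endowed with the weak-$*$ topology (which coincides with weak topology as $X$ is reflexive, so $X^{**}=X$), so $\langle A(u-tw), w\rangle \to \langle Au, w\rangle$. This yields $\langle \xi - Au, w\rangle \geq 0$ for all $w\in X$, and replacing $w$ with $-w$ gives the reverse inequality, so $\langle \xi - Au, w\rangle = 0$ for every $w\in X$. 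Since $w$ is arbitrary, $\xi = Au$ as elements of $X^*$.

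The main (and only) delicate point is the hemicontinuity step: one must be careful that hemicontinuity, which is only continuity along rays in the strong topology of $X$ into the weak topology of $X^*$, is exactly the hypothesis tailored to make the limit $t\downarrow 0$ work when tested against the fixed direction $w$. Reflexivity of $X$ is used implicitly only to ensure the weak convergences in the hypotheses behave as expected (e.g.\ uniform boundedness and identification of $X^{**}$ with $X$), but the core argument is purely algebraic in monotonicity plus a one-dimensional passage to the limit.
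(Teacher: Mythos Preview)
Your proof is correct and is exactly the standard Minty trick. The paper does not actually prove this lemma; it merely states it and cites Zeidler \cite[(25.4b)]{Zeidler1990}, so there is nothing to compare against beyond noting that your argument is the classical one found in that reference.
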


Concerning the identification of the other non-linear term, one proceeds differently. Indeed, for a generic non-linearity $b:\R^N\to \R^N$, we can only hope for $b(u^\epsilon)\rightharpoonup b(u)$ provided that the convergence $u^\epsilon\to u$ holds in the strong topology of some function space. Towards this end, we require some refined a priori bounds and an associated compact embedding, given by the following classical result. 

\begin{lemma}[An Aubin-Lions Lemma {\cite[Corollary~5]{MR916688}}] \label{lem:Aubin-Lions}
Given Banach spaces~$X$, $B$ and $Y$ and assume $X \hookrightarrow \hookrightarrow B \hookrightarrow Y$. Additionally, let $q,r \in [1,\infty]$. Then
\begin{enumerate}
    \item \label{item:Aubin-Lions-leb} if $q < \infty$ and $s > (1/r - 1/q) \vee 0$
        \begin{align} \label{eq:Aubin-Lions-leb}
            L^q(0,T;X) \cap W^{s,r}(0,T;Y) \hookrightarrow\hookrightarrow L^q(0,T;B),
        \end{align}
    \item \label{item:Aubin-Lions-cont} if $q = \infty$ and $s > 1/r$
        \begin{align} \label{eq:Aubin-Lions-cont}
            L^\infty(0,T;X) \cap W^{s,r}(0,T;Y) \hookrightarrow\hookrightarrow C^0(0,T;B).
        \end{align}    
\end{enumerate}
\end{lemma}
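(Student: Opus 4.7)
The plan is to deduce both compact embeddings from the compactness $X \hookrightarrow\hookrightarrow B$ via \emph{Ehrling's lemma}: for every $\varepsilon > 0$ there exists $C_\varepsilon > 0$ such that
\begin{equation*}
    \norm{v}_B \leq \varepsilon \norm{v}_X + C_\varepsilon \norm{v}_Y \qquad \text{ for all } v \in X.
\end{equation*}
This reduces the problem of gaining compactness in $B$ to controlling time-translates in the weaker space $Y$, which is precisely what the fractional Sobolev regularity $W^{s,r}(0,T;Y)$ provides. For both parts I would fix a sequence $(u_n)_n$ bounded in the joint space and extract a subsequence converging in the target space.

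For part \eqref{item:Aubin-Lions-leb} I would apply the Riesz--Fréchet--Kolmogorov compactness criterion in $L^q(0,T;B)$: it suffices to prove uniform smallness of time-translates, i.e.\ $\sup_n \norm{\tau_h u_n - u_n}_{L^q(0,T-h;B)} \to 0$ as $h \to 0$, plus a harmless boundary-tail estimate handled by extending $u_n$ by zero outside a slightly enlarged interval. Splitting the $B$-norm by Ehrling gives
\begin{equation*}
    \norm{\tau_h u_n - u_n}_{L^q B} \leq 2\varepsilon \norm{u_n}_{L^q X} + C_\varepsilon \norm{\tau_h u_n - u_n}_{L^q Y}.
\end{equation*}
The first term is bounded by $2\varepsilon \cdot M$ with $M := \sup_n \norm{u_n}_{L^q X}$ and is made arbitrarily small. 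The second term is where the assumption $s > (1/r - 1/q) \vee 0$ enters: characterizing $W^{s,r}(0,T;Y)$ via the Gagliardo seminorm (for $s < 1$) or by differentiation (for $s \geq 1$) yields an estimate $\norm{\tau_h u_n - u_n}_{L^r Y} \lesssim h^{s'} \norm{u_n}_{W^{s,r} Y}$ for a suitable $s' > 0$, and Hölder interpolation between $L^r(0,T;Y)$ and $L^q(0,T;Y)$ (with the latter bounded via $X \hookrightarrow Y$) converts this into the required decay in $L^q(0,T;Y)$ uniformly in $n$. The threshold $s > (1/r - 1/q)\vee 0$ is exactly what makes the exponent $s'$ positive.

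For part \eqref{item:Aubin-Lions-cont} the key observation is the one-dimensional Sobolev embedding $W^{s,r}(0,T;Y) \hookrightarrow C^{0,s-1/r}(0,T;Y)$, valid precisely when $s > 1/r$. Consequently $(u_n)_n$ is uniformly bounded and uniformly Hölder continuous in $Y$. Combined with the $L^\infty(0,T;X)$ bound and Ehrling's inequality, this yields uniform equicontinuity of $(u_n)_n$ as $B$-valued functions. Pointwise relative compactness in $B$ follows at each $t$ from the $X$-bound and the compactness $X \hookrightarrow\hookrightarrow B$. The Arzelà--Ascoli theorem then delivers a uniformly convergent subsequence in $C^0(0,T;B)$.

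The principal obstacle will be part \eqref{item:Aubin-Lions-leb}, more specifically verifying sharply that the Gagliardo/difference characterization of $W^{s,r}(0,T;Y)$ yields the translate estimate with the correct rate when combined with the $L^q(0,T;X)$ bound. Care must be taken to treat $s \in (0,1)$ and $s \geq 1$ uniformly, to handle the boundary of the interval via reflection or extension so that Riesz--Fréchet--Kolmogorov applies without modification, and to correctly interpolate between the $L^r(0,T;Y)$ and $L^q(0,T;Y)$ scales in both regimes $q \leq r$ (where Hölder in time is immediate) and $q > r$ (where one must exploit the uniform $L^\infty$-in-$X$ bound that the embedding $X \hookrightarrow Y$ transfers to $Y$).
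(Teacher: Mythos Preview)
The paper does not prove this lemma. It is stated in the Appendix purely as a citation of Simon's classical result \cite[Corollary~5]{MR916688} and is used as a black box; no argument is given. Hence there is no ``paper's own proof'' to compare your proposal against.

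For what it is worth, your outline follows the standard route that Simon's paper itself develops: Ehrling's inequality to trade $B$-norms for $X$- and $Y$-norms, a Kolmogorov--Riesz translate criterion for part~\eqref{item:Aubin-Lions-leb}, and Arzel\`a--Ascoli combined with the Sobolev embedding $W^{s,r}(0,T;Y)\hookrightarrow C^{0,s-1/r}(0,T;Y)$ for part~\eqref{item:Aubin-Lions-cont}. Two points where your sketch would need tightening if you were to carry it out in full: (i) in part~\eqref{item:Aubin-Lions-leb} with $q>r$, interpolating a small $L^r_tY$-norm of the translate against a merely bounded $L^q_tY$-norm does not by itself produce smallness in $L^q_tY$; Simon handles this more carefully, and you would need to revisit that step; (ii) in part~\eqref{item:Aubin-Lions-cont}, pointwise relative compactness of $\{u_n(t)\}$ in $B$ requires knowing $u_n(t)\in X$ with a uniform bound \emph{for every} $t$, not just almost every $t$, which needs a short weak-$*$ lower-semicontinuity argument using the $C^0_tY$-representative. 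Neither issue is fatal, but both are genuine details rather than formalities.
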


\subsection*{An algebraic inequality}
\begin{lemma}\label{app:Algebraic}
Let $a, C, K \in \mathbb{R}$ such that $K \geq -C^2/4$. Additionally, assume that
\begin{align}\label{app:AlgebraicAss}
    a^2 \leq K  + C a.
\end{align}
Then 
\begin{align} \label{app:Algebraic eq:Bound}
\frac{C}{2} - \sqrt{K + \frac{C^2}{4}} \leq a \leq \frac{C}{2} + \sqrt{K + \frac{C^2}{4}}.
\end{align}
\end{lemma}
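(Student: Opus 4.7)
The statement is a purely algebraic quadratic inequality, and my plan is to prove it by completing the square. The hypothesis $K \geq -C^2/4$ is precisely what is needed to ensure the square root on the right-hand side is well defined, so this is not an additional obstacle but rather a built-in compatibility condition.

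Concretely, I would start by rewriting the assumption $a^2 \leq K + Ca$ as $a^2 - Ca \leq K$. Adding $C^2/4$ to both sides then yields
\begin{equation*}
    \left(a - \frac{C}{2}\right)^2 = a^2 - Ca + \frac{C^2}{4} \leq K + \frac{C^2}{4}.
\end{equation*}
Since by hypothesis $K + C^2/4 \geq 0$, the right-hand side is nonnegative and I may take square roots to obtain
\begin{equation*}
    \left| a - \frac{C}{2} \right| \leq \sqrt{K + \frac{C^2}{4}},
\end{equation*}
which is exactly equivalent to the claimed two-sided bound~\eqref{app:Algebraic eq:Bound}.

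There is no genuine obstacle here; the only thing worth noting is that the monotonicity of $x \mapsto \sqrt{x}$ on $[0,\infty)$ is used in the last step, which is why the assumption $K \geq -C^2/4$ must be stated explicitly. In the application within Corollary~\ref{cor:UnifiedBounds} one takes $a = \norm{u^\epsilon}_{C^{0,1/2}_t L^2_x} \geq 0$, and it is then the upper bound $a \leq C/2 + \sqrt{K + C^2/4}$ which gets used, followed by the elementary estimate $(x+y)^2 \leq 2(x^2 + y^2)$ to turn the square root into the sum of squares that appears in the final a priori bound.
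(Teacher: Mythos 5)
Your proof is correct and follows the exact same route as the paper's: complete the square to obtain $(a - C/2)^2 \leq K + C^2/4$, then take square roots. The additional remarks about how the lemma is applied in Corollary~\ref{cor:UnifiedBounds} are accurate but not part of the lemma's proof.
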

\begin{proof}
Reordering~\eqref{app:AlgebraicAss} results in 
\begin{align*}
    a(a-C) \leq K.
\end{align*}
Completing the square ensures
\begin{align*}
    \left(a-\frac{C}{2} \right)^2 \leq K + \frac{C^2}{4}. 
\end{align*}
This implies
\begin{align*}
    \abs{a - \frac{C}{2}} \leq \sqrt{K + \frac{C^2}{4}}.
\end{align*}
The assertion is verified.
\end{proof}

If $K< C^2/4$, then the parabola $a \mapsto a(a-C)$ and the constant function $K$ do not intersect. Thus~\eqref{app:AlgebraicAss} wont be satisfied.

\section*{Acknowledgement}The first author acknowledges support from the European Research Council (ERC) under the European Union’s Horizon 2020 research
and innovation programme (grant agreement No. 949981). The second author acknowledges support from Deutsche Forschungsgemeinschaft (DFG, German Research Foundation) – SFB 1283/2 2021 – 317210226.

	\begin{figure}[ht]
	    \centering
	  \includegraphics[height=10mm]{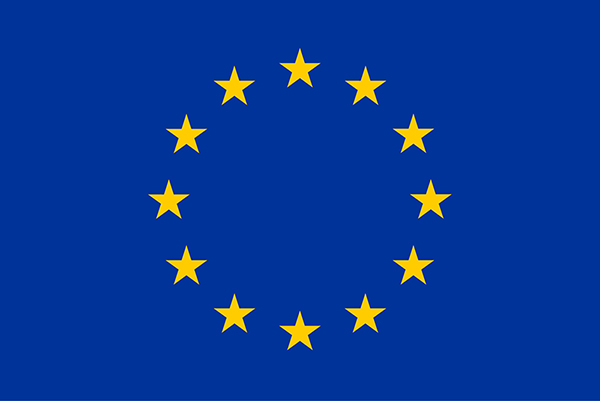}
	\end{figure}

\textbf{Declarations: }
All the authors declare that they have no conflicts of interest. Data availability statement is not applicable in the context to the present article since all the results here are theoretical in nature and do not involve any data.

\bibliographystyle{alpha}
\bibliography{main}

\begin{thebibliography}{GHM22b}

\bibitem[AH20]{MR4041276}
Wolfgang Arendt and Daniel Hauer.
\newblock Maximal {$L^2$}-regularity in nonlinear gradient systems and
  perturbations of sublinear growth.
\newblock {\em Pure Appl. Anal.}, 2(1):23--34, 2020.

\bibitem[Bar76]{MR0390843}
Viorel Barbu.
\newblock {\em Nonlinear semigroups and differential equations in {B}anach
  spaces}.
\newblock Editura Academiei Republicii Socialiste Rom\^{a}nia, Bucharest;
  Noordhoff International Publishing, Leiden, 1976.
\newblock Translated from the Romanian.

\bibitem[Bec22]{bechtold2}
Florian Bechtold.
\newblock {Regularization by random translation of potentials for the
  continuous PAM and related models in arbitrary dimension}.
\newblock {\em Electronic Communications in Probability}, 27:1 -- 13, 2022.

\bibitem[BH23]{bechtold}
Florian Bechtold and Martina Hofmanová.
\newblock Weak solutions for singular multiplicative sdes via regularization by
  noise.
\newblock {\em Stochastic Processes and their Applications}, 157:413--435,
  2023.

\bibitem[BHR23]{bechtold3}
Florian Bechtold, Fabian~A. Harang, and Nimit Rana.
\newblock Non-linear young equations in the plane and pathwise regularization
  by noise for the stochastic wave equation.
\newblock {\em Stochastics and Partial Differential Equations: Analysis and
  Computations}, April 2023.

\bibitem[Bre73]{MR0348562}
H.~Brezis.
\newblock {\em Operateurs maximaux monotones et semi-groupes de contractions
  dans les espaces de {H}ilbert}.
\newblock North-Holland Mathematics Studies, No. 5. North-Holland Publishing
  Co., Amsterdam-London; American Elsevier Publishing Co., Inc., New York,
  1973.

\bibitem[Bro63]{MR156204}
Felix~E. Browder.
\newblock The solvability of non-linear functional equations.
\newblock {\em Duke Math. J.}, 30:557--566, 1963.

\bibitem[CD22]{catellier2}
Rémi Catellier and Romain Duboscq.
\newblock Regularization by noise for rough differential equations driven by
  gaussian rough paths, 2022.
\newblock arXiv:2207.04251.

\bibitem[CG16]{gubicat}
R.~Catellier and M.~Gubinelli.
\newblock Averaging along irregular curves and regularisation of {ODE}s.
\newblock {\em Stochastic Processes and their Applications}, 126(8):2323--2366,
  2016.

\bibitem[CH16]{CH16}
Thierry Coulhon and Daniel Hauer.
\newblock Regularisation effects of nonlinear semigroups, 2016.
\newblock arXiv:1604.08737.

\bibitem[CH21]{catellierharang}
Rémi Catellier and Fabian~A. Harang.
\newblock Pathwise regularization of the stochastic heat equation with
  multiplicative noise through irregular perturbation, 2021.
\newblock arXiv:2101.00915.

\bibitem[DG22]{dareiotis}
Konstantinos Dareiotis and Máté Gerencsér.
\newblock Path-by-path regularisation through multiplicative noise in rough,
  young, and ordinary differential equations, 2022.
\newblock arXiv:2207.03476.

\bibitem[FH14]{frizhairer}
Peter~K. Friz and Martin Hairer.
\newblock {\em A course on rough paths}.
\newblock Universitext. Springer, Cham, 2014.

\bibitem[Fuj70]{MR0269995}
Hiroshi Fujita.
\newblock On some nonexistence and nonuniqueness theorems for nonlinear
  parabolic equations.
\newblock In {\em Nonlinear {F}unctional {A}nalysis ({P}roc. {S}ympos. {P}ure
  {M}ath., {V}ol. {XVIII}, {P}art 1, {C}hicago, {I}ll., 1968)}, pages 105--113.
  Amer. Math. Soc., Providence, R.I., 1970.

\bibitem[Gal21]{Galeati2021}
Lucio Galeati.
\newblock Nonlinear young differential equations: A review.
\newblock {\em Journal of Dynamics and Differential Equations}, February 2021.

\bibitem[Ger22]{Gerencs_r_2022}
M{\'{a}}t{\'{e}} Gerencs{\'{e}}r.
\newblock Regularisation by regular noise.
\newblock {\em Stochastics and Partial Differential Equations: Analysis and
  Computations}, March 2022.

\bibitem[GG20]{prevalence}
Lucio Galeati and Massimiliano Gubinelli.
\newblock Prevalence of $\rho$-irregularity and related properties, 2020.
\newblock arXiv:2004.00872.

\bibitem[GG21]{galeati2020noiseless}
Lucio Galeati and Massimiliano Gubinelli.
\newblock Noiseless regularisation by noise.
\newblock {\em Revista Matem{\'{a}}tica Iberoamericana}, 38(2):433--502, July
  2021.

\bibitem[GG22]{gerencsergal}
Lucio Galeati and Máté Gerencsér.
\newblock Solution theory of fractional sdes in complete subcritical regimes,
  2022.
\newblock arXiv:2207.03475.

\bibitem[GGZ74]{MR0636412}
Herbert Gajewski, Konrad Gr\"{o}ger, and Klaus Zacharias.
\newblock {\em Nichtlineare {O}peratorgleichungen und
  {O}peratordifferentialgleichungen}.
\newblock Mathematische Lehrb\"{u}cher und Monographien, II. Abteilung,
  Mathematische Monographien, Band 38. Akademie-Verlag, Berlin, 1974.

\bibitem[GH80]{horowitz}
Donald Geman and Joseph Horowitz.
\newblock {Occupation Densities}.
\newblock {\em The Annals of Probability}, 8(1):1 -- 67, 1980.

\bibitem[GH22]{galeatiharang}
Lucio Galeati and Fabian~A. Harang.
\newblock Regularization of multiplicative {SDEs} through additive noise.
\newblock {\em The Annals of Applied Probability}, 32(5), October 2022.

\bibitem[GHM22a]{galeati2}
Lucio Galeati, Fabian~A. Harang, and Avi Mayorcas.
\newblock {Distribution dependent SDEs driven by additive continuous noise}.
\newblock {\em Electronic Journal of Probability}, 27(none):1 -- 38, 2022.

\bibitem[GHM22b]{Galeati2022}
Lucio Galeati, Fabian~A. Harang, and Avi Mayorcas.
\newblock Distribution dependent {SDEs} driven by additive fractional brownian
  motion.
\newblock {\em Probability Theory and Related Fields}, May 2022.

\bibitem[GM23]{lukasz}
Paul Gassiat and {\L}ukasz M\k{a}dry.
\newblock Perturbations of singular fractional sdes.
\newblock {\em Stochastic Processes and their Applications}, 161:137--172,
  2023.

\bibitem[Gub04]{gubi}
M~Gubinelli.
\newblock Controlling rough paths.
\newblock {\em J. Func. Anal.}, 216(1):86 -- 140, 2004.

\bibitem[HL21]{Harang2021}
Fabian~A. Harang and Chengcheng Ling.
\newblock Regularity of local times associated with
  volterra{\textendash}l{\'{e}}vy processes and path-wise regularization of
  stochastic differential equations.
\newblock {\em Journal of Theoretical Probability}, July 2021.

\bibitem[HM23]{particlesystems}
Fabian~A. Harang and Avi Mayorcas.
\newblock Pathwise regularisation of singular interacting particle systems and
  their mean field limits.
\newblock {\em Stochastic Processes and their Applications}, 159:499--540,
  2023.

\bibitem[HP21]{harang2020cinfinity}
F.~Harang and N.~Perkowski.
\newblock C$\infty$- regularization of {ODEs} perturbed by noise.
\newblock {\em Stochastics and Dynamics}, page 2140010, 2021.

\bibitem[Kat67]{MR226230}
Tosio Kato.
\newblock Nonlinear semigroups and evolution equations.
\newblock {\em J. Math. Soc. Japan}, 19:508--520, 1967.

\bibitem[Kom67]{MR216342}
Yukio Komura.
\newblock Nonlinear semi-groups in {H}ilbert space.
\newblock {\em J. Math. Soc. Japan}, 19:493--507, 1967.

\bibitem[KS22]{kuhn2021convolution}
Franziska K\"{u}hn and Ren{\'{e}}~L. Schilling.
\newblock Convolution inequalities for besov and triebel{\textendash}lizorkin
  spaces, and applications to convolution semigroups.
\newblock {\em Studia Mathematica}, 262(1):93--119, 2022.

\bibitem[Lio69]{MR0259693}
J.-L. Lions.
\newblock {\em Quelques m\'{e}thodes de r\'{e}solution des probl\`emes aux
  limites non lin\'{e}aires}.
\newblock Dunod, Paris; Gauthier-Villars, Paris, 1969.

\bibitem[Min62]{MR169064}
George~J. Minty.
\newblock Monotone (nonlinear) operators in {H}ilbert space.
\newblock {\em Duke Math. J.}, 29:341--346, 1962.

\bibitem[MP22]{toyomu}
Toyomu Matsuda and Nicolas Perkowski.
\newblock An extension of the stochastic sewing lemma and applications to
  fractional stochastic calculus, 2022.
\newblock arXiv:2206.01686.

\bibitem[\^{O}77]{MR477358}
Mitsuharu \^{O}tani.
\newblock On the existence of strong solutions for {$du/dt(t)+\partial \psi
  ^{1}(u(t))-\partial \psi ^{2}(u(t))\ni f(t)$}.
\newblock {\em J. Fac. Sci. Univ. Tokyo Sect. IA Math.}, 24(3):575--605, 1977.

\bibitem[\^{O}82]{MR675911}
Mitsuharu \^{O}tani.
\newblock Nonmonotone perturbations for nonlinear parabolic equations
  associated with subdifferential operators, {C}auchy problems.
\newblock {\em J. Differential Equations}, 46(2):268--299, 1982.

\bibitem[RT22]{tolomeo}
Marco Romito and Leonardo Tolomeo.
\newblock Yet another notion of irregularity through small ball estimates,
  2022.
\newblock arXiv:2207.02716.

\bibitem[Sim87]{MR916688}
Jacques Simon.
\newblock Compact sets in the space {$L^p(0,T;B)$}.
\newblock {\em Ann. Mat. Pura Appl. (4)}, 146:65--96, 1987.

\bibitem[Tsu72]{MR0312079}
Masayoshi Tsutsumi.
\newblock Existence and nonexistence of global solutions for nonlinear
  parabolic equations.
\newblock {\em Publ. Res. Inst. Math. Sci.}, 8:211--229, 1972.

\bibitem[Zei90a]{Zeidler1990}
Eberhard Zeidler.
\newblock {\em Nonlinear Functional Analysis and Its Applications : II/ A:
  Linear Monotone Operators}.
\newblock Springer New York, 1990.

\bibitem[Zei90b]{MR1033498}
Eberhard Zeidler.
\newblock {\em Nonlinear functional analysis and its applications. {II}/{B}}.
\newblock Springer-Verlag, New York, 1990.
\newblock Nonlinear monotone operators, Translated from the German by the
  author and Leo F. Boron.

\end{thebibliography}

\end{document}